\documentclass{article}
\usepackage[margin=1.5in]{geometry}
\usepackage[british]{babel}
\usepackage{amsthm}
\usepackage{mathtools}
\usepackage{amssymb, enumerate}
\usepackage{latexsym}
\usepackage{url}
\usepackage[all,cmtip]{xy}
\usepackage{amsmath,amssymb}
\usepackage{tikz-cd}
\usepackage{faktor}

\newcounter{itemcounter}
\numberwithin{itemcounter}{section}

\newtheorem{assumption}[itemcounter]{Assumption}
\newtheorem{thm}[itemcounter]{Theorem}
\newtheorem{lem}[itemcounter]{Lemma}

\newtheorem{prop}[itemcounter]{Proposition}

\newtheorem{rem}[itemcounter]{Remark}

\newtheorem{property}[itemcounter]{Property}
\newtheorem*{thm*}{Theorem}
\newtheorem*{con*}{Conjecture}
\newtheorem*{cor*}{Corollary}
\newtheorem*{ack*}{Acknowledgements}

\newcommand{\Inf}{\mathop{\rm Inf}\nolimits}

\newcommand{\Irr}{\mathop{\rm Irr}\nolimits}

\newcommand{\IBr}{\mathop{\rm IBr}\nolimits}

\newcommand{\Hom}{\mathop{\rm Hom}\nolimits}

\newcommand{\Aut}{\mathop{\rm Aut}\nolimits}
\newcommand{\Ext}{\mathop{\rm Ext}\nolimits}
\newcommand{\Ann}{\mathop{\rm Ann}\nolimits}
\newcommand{\Tor}{\mathop{\rm Tor}\nolimits}

\newcommand{\Pic}{\mathop{\rm Pic}\nolimits}

\newcommand{\nth}{\mathop{\rm th}\nolimits}

\newcommand{\cT} {\mathcal{T}}
\newcommand{\cL} {\mathcal{L}}
\newcommand{\cE} {\mathcal{E}}
\newcommand{\cO} {\mathcal{O}}

\newcommand{\NN} {\mathbb{N}}
\newcommand{\ZZ} {\mathbb{Z}}

\title{Picard groups for blocks with normal defect groups and linear source bimodules }
\author{Michael Livesey\footnote{School of Mathematics, University of Manchester, Manchester, M13 9PL, United Kingdom. Email: michael.livesey@manchester.ac.uk, orcid: 0000-0003-3431-9020} and Claudio Marchi\footnote{School of Mathematics, University of Manchester, Manchester, M13 9PL, United Kingdom. Email: claudio.marchi@manchester.ac.uk, orcid: 0000-0003-2083-5016}}
\date{}

\begin{document}

\maketitle

\begin{abstract}

It is an open problem as to whether any bimodule inducing a Morita auto-equivalence of a block must have endopermutation source. We prove that, for blocks $b$ with normal defect groups in odd characteristic, a stronger result holds, namely that all such bimodules have linear source. We also prove the analogous result in characteristic $2$, provided that the defect group is of a specific, slightly restrictive, form.
\end{abstract}

\section{Introduction}

Let $\cO$ be a complete discrete valuation ring, with $k:=\cO/J(\cO)$ an algebraically closed field of characteristic $p>0$ and $K$, a field of characteristic zero, the field of fractions of $\cO$. Let $H$ be a finite group. In this setting $K$ will always be large enough, meaning that it contains all $\vert H\vert^{\nth}$ roots of unity. For the remainder of the introduction let $b$ be a block of $H$, by which we mean a block of $\cO H$.
\newline
\newline
The Picard group $\Pic(b)$ of $b$ consists of isomorphism classes of invertible $b$-$b$-bimodules. If $M\in\Pic(b)$, then $M$ induces an $\cO$-linear Morita auto-equivalence of $b$ given by $M\otimes_b-$. There are three important subgroups of $\Pic(b)$ that will form the main area of interest for this article. (For more details on $\cT(b)$, $\cL(b)$ and $\cE(b)$ see \cite{bokeli20}.)
\begin{align*}
\cT(b)&=\left\lbrace [M]_{\sim}\in\Pic(b)\vert M\text{ has trivial source as an }\cO (H\times H)\text{-module}\right\rbrace\\
\cL(b)&=\left\lbrace [M]_{\sim}\in\Pic(b)\vert M\text{ has linear source as an }\cO (H\times H)\text{-module}\right\rbrace\\
\cE(b)&=\left\lbrace [M]_{\sim}\in\Pic(b)\vert M\text{ has endopermutation source as an }\cO (H\times H)\text{-module}\right\rbrace.
\end{align*}
Morita equivalences given by endopermutation source bimodules seem to be very common in practice, in fact there are no known examples of Morita equivalences of blocks given by a bimodule that does not have endopermutation source. It is, therefore, a very natural question to ask if all elements of $\Pic(b)$ have endopermutation source. If it were always the case that $\Pic(b)=\cE(b)$, then it would be known that $\Pic(b)$ is bounded in terms of a function of the order of the defect group (see \cite[Theorem 1.3]{lili20}). In fact, it is proved in \cite[Corollary 1.2]{ei19} that $\Pic(b)$ is at least always finite.
\newline
\newline
For blocks with normal defect group, it is already known that $\cE(b)=\cL(b)$ (see \cite[Theorem 1.5]{lili20}). Therefore, in this case we ask whether $\Pic(b)=\cL(b)$. In \cite[Theorem 6.3]{livesey19} this was shown to hold for blocks with normal abelian defect and abelian inertial quotient. We improve this result, removing any hypothesis on the structure of the defect group and the inertial quotient, in our main theorem:
\begin{thm*}[\ref{thm:main}]
Let $p>2$ and $b$ a block with normal defect group. Then $\Pic(b)=\cL(b)$.
\end{thm*}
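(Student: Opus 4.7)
The plan is to reduce to the source algebra and then exploit the invertibility constraint on the source module. Since $\Pic(b)$ and the property of having linear or endopermutation source are invariants under Morita equivalence induced by an interior $D$-algebra isomorphism, I may replace $b$ by its source algebra. By K\"ulshammer's description of blocks with normal defect group, this source algebra is isomorphic, as an interior $D$-algebra, to a twisted group algebra $A := \cO_\alpha L$ where $L = D \rtimes E$, $E$ is the inertial quotient, and $\alpha$ is a $2$-cocycle pulled back from $E$. Moreover, since $\cE(b) = \cL(b)$ is already known in this setting by \cite[Theorem 1.5]{lili20}, it suffices to show the \emph{a priori} weaker inclusion $\Pic(b) \subseteq \cE(b)$, i.e.\ that every $M \in \Pic(A)$ has endopermutation source.

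Let $M \in \Pic(A)$ with inverse $M^*$, so $M \otimes_A M^* \cong A$ as $A$-$A$-bimodules. The standard analysis of invertible bimodules shows that, as an $\cO(L \times L)$-module, $M$ has vertex of order $|D|$, and up to conjugation this vertex is a twisted diagonal $\Delta_\sigma D := \{(d,\sigma(d)) : d \in D\}$ for some $\sigma \in \Aut(D)$ compatible with the fusion system on $D$. Identifying $\Delta_\sigma D$ with $D$ via the first projection, the source of $M$ becomes an indecomposable $\cO D$-lattice $V$, and our task reduces to showing that $V$ is an endopermutation $\cO D$-module.

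The key step is to exploit the isomorphism $M \otimes_A M^* \cong A$ together with the observation that $A$, viewed as an $\cO L$-module under the conjugation action of $\Delta L$, is a $p$-permutation module (inherited from $\cO L$ being a permutation module with basis $L$ under conjugation, twisted by $\alpha$). Applying a Mackey-style decomposition to the restriction of $M \otimes_A M^*$ to $\Delta_\sigma D$, and tracking how the source $V$ and its dual $V^*$ contribute to this restriction via Green correspondence, one extracts $V \otimes_\cO V^* \cong \End_\cO(V)$ as a direct summand of a permutation $\cO D$-module. This is precisely the defining property of $V$ being an endopermutation module, so $M \in \cE(b)$, and hence $M \in \cL(b)$ by the Li--Livesey equality.

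The main obstacle I expect is the bookkeeping around the twisted cocycle $\alpha$ and the twisted diagonal $\Delta_\sigma D$: one must verify that the vertex does lie in this form, that the source is well-behaved under restriction and duality through the cocycle twist, and that the Mackey-style extraction of $V \otimes_\cO V^*$ from $\Res_{\Delta_\sigma D} (M \otimes_A M^*)$ genuinely produces it as a summand of a permutation module rather than of some less tractable $p$-permutation module. A further delicate point is that for non-abelian $D$ or non-abelian $E$ the fusion-compatible $\sigma$ can be genuinely nontrivial, so the argument must go beyond the abelian-inertial-quotient case of \cite[Theorem 6.3]{livesey19}. The oddness of $p$ presumably enters both in the Li--Livesey step $\cE(b) = \cL(b)$ and in keeping the $p$-permutation/endopermutation dichotomy clean, which is consistent with the extra hypothesis on $D$ required in the $p=2$ analogue mentioned in the abstract.
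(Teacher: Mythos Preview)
Your proposal contains a genuine gap, and it is worth noting that if the argument you sketch were valid, it would prove $\Pic(b)\subseteq\cE(b)$ for \emph{every} block, not just those with normal defect group: nothing in your Mackey/Green-correspondence step uses normality of $D$ in any essential way. But the paper's introduction explicitly records $\Pic(b)=\cE(b)$ as an open problem in general, so something must fail.

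The failure is in the ``Mackey-style extraction'' of $V\otimes_\cO V^*$ from $M\otimes_A M^*\cong A$. The Mackey formula controls restrictions of tensor products over $\cO$, or equivalently compositions of induced bimodules over group algebras; it does not give you clean access to $V\otimes_\cO V^*$ inside a tensor product taken over the block $A$. Concretely, $M$ is only a \emph{summand} of $\Ind_{\Delta_\sigma D}^{L\times L}V$, and the relative tensor $-\otimes_A-$ does not commute with taking summands or with restriction in any way that isolates $V\otimes_\cO V^*$ as a summand of a permutation $\cO D$-module. Knowing that $\Res_{\Delta_\sigma D}(M\otimes_A M^*)$ is a permutation module tells you nothing direct about the source $V$. (A secondary issue: the identification of the vertex of an arbitrary $M\in\Pic(b)$ with a twisted diagonal $\Delta_\sigma D$ is itself part of what is proved in \cite{bokeli20} \emph{under} the hypothesis that $M\in\cE(b)$; you are assuming part of the conclusion.)

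A diagnostic you already flagged yourself: your argument never uses $p>2$. The equality $\cE(b)=\cL(b)$ from \cite{lili20} (Linckelmann--Livesey, not ``Li--Livesey'') holds for all $p$ when the defect group is normal, so oddness cannot enter there. In the paper, oddness enters through an arithmetic fact about $1-\zeta\notin p\cO$ for primitive $p^n$-th roots of unity, used in the $\Ext^2$ computations of \S\ref{sec:ext}. The paper's route is entirely different from yours: it is character-theoretic. One first reduces to $B=\cO(D\rtimes E)e_\varphi$, shows any $M\in\Pic(B)$ permutes the height-zero characters $\Irr(B\,|\,1_{[D,D]})$, passes via Weiss' criterion (Proposition~\ref{prop:indMor}) to the abelian-defect block $B^{[D,D]}$, and then uses the $\Ext$-group calculations culminating in Proposition~\ref{prop:good} to pin down the image of $\Irr(B\,|\,1_D)$ under $M$ up to a twist by a linear character $\omega$. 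A second application of Weiss' criterion to $M_{\omega^{-1}}\otimes_B M$ then forces this twisted bimodule into $\cT(B)$, whence $M\in\cL(B)$.
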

The corresponding theorem for $p=2$ is currently out of reach of the authors using the methods outlined in this article. However, if the abelianisation of the defect group is sufficiently ``tall'' the theorem still holds:
\begin{thm*}[\ref{thm:main2}]
Let $p=2$ and $b$ a block with normal defect group $D$ such that $D/[D,D]$ has no direct factor isomorphic to $C_2$. Then $\Pic(b)=\cL(b)$.
\end{thm*}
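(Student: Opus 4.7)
The strategy is to mirror the proof of Theorem \ref{thm:main} and to isolate the one point at which the characteristic $2$ setting requires an additional hypothesis on $D$. Since for blocks with normal defect group we already have $\cE(b)=\cL(b)$ by \cite[Theorem 1.5]{lili20}, it suffices to prove $\Pic(b)\subseteq\cE(b)$, i.e.\ that every $[M]\in\Pic(b)$ has endopermutation source. After passing to the source algebra of $b$, K\"ulshammer's structural description of blocks with normal defect group lets us assume $b=\cO_\alpha(D\rtimes E)$ for $E\le\Aut(D)$ a $p'$-group (the inertial quotient) and $\alpha\in H^2(E,\cO^\times)$; this reduction is through a linear source bimodule and so is harmless for the present purpose.

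Given $[M]\in\Pic(b)$, the standard vertex-source analysis shows that the vertex of $M$ is, up to conjugation in $(D\rtimes E)^{\times 2}$, a twisted diagonal $\Delta_\sigma D$ for some $\sigma\in\Aut(D\rtimes E)$ preserving $D$, and the source $V$ is an indecomposable $\cO$-free $\cO\Delta_\sigma D$-module. The task is then to show that $V$ is an endopermutation module, equivalently that its class in the Dade group $D(D)$ is trivial modulo the subgroup generated by linear characters of $D$.

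For $p>2$, the argument of Theorem \ref{thm:main} forces this class to lie in a $p'$-torsion subgroup, which must then be trivial. For $p=2$ the same analysis places the obstruction in a $2$-torsion subquotient of $D(D)$. Inspecting the known structure of the Dade group of a $2$-group, where the exotic torsion beyond linear characters is detected on quotients of the form $D\twoheadrightarrow C_2$ admitting a section, i.e.\ on $C_2$ direct factors of $D/[D,D]$, the hypothesis that $D/[D,D]$ has no direct factor $C_2$ forces this obstruction to vanish. Thus the argument of Theorem \ref{thm:main} applies verbatim and we conclude $[M]\in\cE(b)=\cL(b)$.

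The main obstacle is making this last step precise: one needs to verify that the $2$-primary obstruction arising in the vertex-source analysis is genuinely of the kind controlled by $C_2$ direct factors of $D/[D,D]$, rather than by some a priori more subtle $2$-torsion in $D(D)$. This requires careful tracking of the data $(\alpha,\sigma,V)$ through the reductions, duplicating the $p>2$ argument while paying extra attention to the $2$-primary parts at each stage, and invoking the hypothesis on $D/[D,D]$ exactly at the step where the odd-characteristic proof simply uses invertibility of $p$.
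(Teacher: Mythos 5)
Your proposal heads in a genuinely different direction from the paper, and it has a real gap at precisely the point you flag as the ``main obstacle.'' The paper does not argue via the Dade group at all, and it does not show that the source of $M$ is endopermutation and then invoke $\cE(b)=\cL(b)$. Instead, the whole of $\S$\ref{sec:ext} is devoted to $\Ext$-group computations that characterise, in a way that is Morita-invariant by Lemma~\ref{lem:char_realise}(ii), the subsets of $\Irr(B)$ lying over a single linear character of $D$ (Property~\ref{property:good} and Proposition~\ref{prop:good}); the hypothesis on $D/[D,D]$ enters there, not in any Dade-group calculation. The proof of Theorem~\ref{thm:main2} is then literally ``identical to that of Theorem~\ref{thm:main}'': one checks that $M$ permutes $\Irr(B\vert 1_{[D,D]})$, passes to $B^{[D,D]}$ where the defect group is abelian, applies Proposition~\ref{prop:good} to locate the image of $\Irr(B\vert 1_D)$ as $\Irr(B\vert 1_{D_1}\otimes\theta)$, twists $M$ by the inflated linear character $\omega^{-1}$, and then applies Weiss' criterion (Proposition~\ref{prop:indMor}) to the twisted bimodule to land in $\cT(B)\leq\cL(B)$.

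The specific role of the hypothesis ``no $C_2$ direct factor'' is elementary and arithmetic, not Dade-theoretic. In Lemma~\ref{lem:ext_groups}(iv) and Lemma~\ref{lem:kuenn} the relevant $\Ext$-groups are finite direct sums of modules $\cO/p^{m}\cO$ (in the ``equal linear part'' case) or $\cO/(1-\zeta)\cO$ with $\zeta$ a nontrivial $p$-power root of unity (in the ``unequal'' case). The key distinguishing fact is that $1-\zeta\notin p\cO$ --- unless $p=2$ and $\zeta=-1$, i.e.\ unless there is a $C_2$ factor in the relevant abelian $p$-group. Excluding $C_2$ factors restores this distinction for $p=2$ and lets Lemma~\ref{lem:Ext2_non-zero} and Proposition~\ref{prop:good} go through verbatim. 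Your sketch replaces this concrete numerical observation with an appeal to ``exotic $2$-torsion in $D(D)$ detected on $C_2$ quotients,'' a statement which is neither precise nor supplied with a reference, and which would require work to relate to the vertex--source data $(\alpha,\sigma,V)$ at all. As written, the two paragraphs beginning ``For $p>2$\ldots'' and ``The main obstacle\ldots'' assert the conclusion rather than prove it; there is no step in the paper corresponding to the claim that the source class lies in a $p'$-torsion subgroup of $D(D)$, and the reduction to that claim is itself unjustified.
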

The following notation will hold throughout the article. If $N\lhd H$ and $\chi\in\Irr(N)$, then we denote by $\Irr(H|\chi)$ the set of irreducible characters of $H$ appearing as constituents of $\chi\uparrow_N^H$. Similarly, we set $\Irr(b|\chi):=\Irr(b)\cap\Irr(H|\chi)$. Let $F\leq H$. For any $h\in H$ and $\chi\in \Irr(F)$, we define ${}^hF = hFh^{-1}$ and ${}^h\chi\in\Irr({}^hF)$ by ${}^h\chi(g)=\chi(h^{-1}gh)$, for all $g\in {}^hF$. If now $E\leq H$ normalises $F$, then $E_\chi$ will symbolise the stabiliser of $\chi$ in $E$ and if $\eta = {}^h\chi$, for some $h\in E$ and $\eta\in \Irr(F)$, then we write $\chi\sim_E\eta$. We also adopt all the analogous above notation for Brauer characters, where we replace $\Irr$ with $\IBr$.
\newline
\newline
$\langle,\rangle_H$ will denote the usual inner product on $\ZZ\Irr(H)$. If $\chi\in\Irr(H)$ is a linear character then $\cO_\chi$ will be the $\cO H$-module $\cO$ with action of $H$ defined through $\chi$. We use $1_H\in\Irr(H)$ to signify the trivial character of $H$, $e_b\in\cO H$ the block idempotent of $b$ and $e_\chi\in KH$ the character idempotent associated to any $\chi\in\Irr(H)$. We define $\overline{\phantom{A}}:\cO\to k$ to be the natural quotient map, $\overline{\phantom{A}}:\ZZ\Irr(H)\to \ZZ\IBr(H)$ the corresponding reduction modulo $p$ map and $\overline{M}$ to be $k\otimes_{\cO} M$, for any $\cO H$-module $M$. We adopt the convention that all $\cO H$-modules are finitely generated and free as $\cO$-modules.
\newline
\newline
The article is organised as follows. $\S$\ref{sec:ext} is concerned with Ext groups and how they can be used to distinguish certain subsets of characters of a particular block. In $\S$\ref{sec:main} we prove our main theorems. The proofs heavily rely on an application of Weiss' criterion.

\section{Ext groups}\label{sec:ext}

This section is concerned with using Ext groups to distinguish certain subsets of irreducible characters of a block with normal defect group. This will allow us to apply Weiss' criterion in $\S$\ref{sec:main} via Proposition~\ref{prop:indMor}. We first gather together various well known results on Ext groups with respect to group rings. In what follows all cohomology groups will be calculated over $\cO$, not $\ZZ$. Also all isomorphisms of Ext and cohomology groups will assumed to be isomorphisms as $\cO$ or $k$-modules and not just as abelian groups.

\begin{lem}\label{lem:ext_groups}
Let $H$ be a finite group.
\begin{enumerate}[(i)]
\item For any $i\in\NN_0$ and $\cO H$-modules $M_1,M_2$,
\begin{align*}
\Ext_{\cO H}^i(M_1,M_2)\simeq H^i(H,M_1^*\otimes_{\cO}M_2).
\end{align*}
\item For any $N\leq H$, $\cO N$-module $M_1$ and $\cO H$-module $M_2$,
\begin{align*}
\Ext_{\cO H}^i(M_1\uparrow_N^H,M_2)&\simeq \Ext_{\cO N}^i(M_1,M_2\downarrow_N^H),\\
\Ext_{\cO H}^i(M_2,M_1\uparrow_N^H)&\simeq \Ext_{\cO N}^i(M_2\downarrow_N^H,M_1).
\end{align*}
\item If $H=H_1\times H_2$, then for any $n\in\NN$, $\cO H_1$-modules $U_1,V_1$ and $\cO H_2$-module $U_2,V_2$, there exists a split, short exact sequence
\begin{align*}
0& \longrightarrow \bigoplus_{i+j=n}\Ext_{\cO H_1}^i(U_1,V_1)\otimes_{\cO} \Ext_{\cO H_2}^j(U_2,V_2)\longrightarrow \Ext_{\cO H}^n(U_1\otimes_{\cO}U_2 ,U_1\otimes_{\cO}U_2)\\
&\longrightarrow \bigoplus_{i+j=n+1}\Tor^{\cO}_1(\Ext_{\cO H_1}^i(U_1,V_1), \Ext_{\cO H_2}^j(U_2,V_2)) \longrightarrow 0.
\end{align*}
\item If $H$ is an abelian $p$-group, say $H\simeq C_{p^{n_1}}\times\dots\times C_{p^{n_t}}$, and $\lambda_1,\lambda_2\in\Irr(H)$, then
\begin{align*}
\Ext_{\cO H}^0(\cO_{\lambda_1},\cO_{\lambda_2})&=
\begin{cases}
\cO&\text{if }\lambda_1=\lambda_2\\
\{0\}&\text{otherwise}
\end{cases},\\
\Ext_{\cO H}^1(\cO_{\lambda_1},\cO_{\lambda_2})&=
\begin{cases}
\{0\}&\text{if }\lambda_1=\lambda_2\\
\cO/(1-\zeta)\cO&\text{otherwise}
\end{cases},\\
\Ext_{\cO H}^2(\cO_{\lambda_1},\cO_{\lambda_2})&=
\begin{cases}
\bigoplus\limits_{i=1}^t\cO/p^{n_i}\cO&\text{if }\lambda_1=\lambda_2\\
[\cO/(1-\zeta)\cO]^{\oplus(t-1)}&\text{otherwise}
\end{cases},
\end{align*}
where it is assumed, if $\lambda_1\neq\lambda_2$, that $\lambda_1.\lambda_2^{-1}$ has image $\{\zeta^i\}_{i\in\ZZ}$, for some $p^{\nth}$-power root of unity $\zeta$.
\item For any $\cO$-module $M$,
\begin{align*}
\Tor^{\cO}_1(\cO,M)=\Tor^{\cO}_1(M,\cO)=\{0\}.
\end{align*}
Furthermore,
\begin{align*}
\Tor^{\cO}_1(\cO/a\cO,\cO/b\cO)\simeq \cO/a\cO\otimes_{\cO}\cO/b\cO,
\end{align*}
for all non-zero $a,b\in\cO$.
\end{enumerate}
\end{lem}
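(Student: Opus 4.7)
The plan is to treat this lemma as a collection of standard homological algebra facts, each reducing to a well-known computation.

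For (i), since every module is assumed $\cO$-free, one has $\Hom_\cO(M_1, M_2) \simeq M_1^*\otimes_\cO M_2$; taking $H$-fixed points gives $\Hom_{\cO H}(M_1, M_2) \simeq (M_1^*\otimes_\cO M_2)^H$. Starting from any $\cO H$-projective resolution $P_\bullet$ of $\cO$, the complex $P_\bullet \otimes_\cO M_1$ is an $\cO H$-projective resolution of $M_1$ (tensoring with an $\cO$-free module is exact), and deriving both sides of the identification of functors produces the isomorphism with $H^i(H, M_1^*\otimes_\cO M_2)$. Part (ii) is the Eckmann--Shapiro lemma, applied twice using that induction is both the left and the right adjoint of restriction for finite-index subgroups. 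For (v), flatness of $\cO$ over itself gives the first vanishings, while the free resolution $0 \to \cO \xrightarrow{\cdot a} \cO \to \cO/a\cO \to 0$, tensored with $\cO/b\cO$, identifies $\Tor^\cO_1(\cO/a\cO, \cO/b\cO)$ with the $a$-torsion of $\cO/b\cO$; in the DVR $\cO$ this equals $\cO/\gcd(a,b)\cO$, which is also the value of $\cO/a\cO \otimes_\cO \cO/b\cO$.

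For (iii), I would fix $\cO H_i$-projective resolutions $P_\bullet \to U_1$ and $Q_\bullet \to U_2$; both being $\cO$-free, $P_\bullet \otimes_\cO Q_\bullet$ is an $\cO(H_1 \times H_2)$-projective resolution of $U_1\otimes_\cO U_2$. The tensor-Hom adjunction identifies
\[
\Hom_{\cO(H_1\times H_2)}(P_\bullet \otimes_\cO Q_\bullet, V_1\otimes_\cO V_2) \simeq \Hom_{\cO H_1}(P_\bullet, V_1) \otimes_\cO \Hom_{\cO H_2}(Q_\bullet, V_2),
\]
and the algebraic K\"unneth formula for cochain complexes over the PID $\cO$ then supplies the required split short exact sequence.

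The most involved part is (iv). I would first dispatch the cyclic case $H = C_{p^n} = \langle g \rangle$ via the standard periodic resolution
\[
\cdots \to \cO H \xrightarrow{g-1} \cO H \xrightarrow{N} \cO H \xrightarrow{g-1} \cO H \longrightarrow \cO \to 0, \qquad N = 1+g+\cdots+g^{p^n-1}.
\]
Applying $\Hom_{\cO H}(-, \cO_\lambda)$ for $\lambda := \lambda_1^{-1}\lambda_2$ yields the complex $\cO \xrightarrow{\lambda(g)-1} \cO \xrightarrow{N(\lambda)} \cO \xrightarrow{\lambda(g)-1} \cdots$, where $N(\lambda) = p^n$ if $\lambda = 1$ and $N(\lambda) = 0$ otherwise; reading off cohomology, and observing that $\lambda(g)-1$ generates the same ideal in $\cO$ as $1-\zeta$, recovers the stated values in the cyclic case (after using (i) to identify $H^i(H,\cO_\lambda)$ with the relevant $\Ext$). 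For general abelian $p$-groups I would iterate (iii), using $\cO_{\mu_1}\otimes_\cO \cO_{\mu_2}\simeq \cO_{\mu_1\mu_2}$, to reduce to products of cyclic factors. The main bookkeeping obstacle is tracking how the tensor and Tor terms assemble across the $t$ cyclic factors; in particular, explaining why only $(t-1)$ (rather than $t$) copies of $\cO/(1-\zeta)\cO$ appear in $\Ext^2$ when $\lambda_1\neq\lambda_2$ should fall out of the K\"unneth bookkeeping, but this is the one place where genuine care is required.
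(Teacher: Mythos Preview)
Your proposal is correct and matches the paper's approach: the paper dispatches (i)--(iv) by citing standard references (Shapiro, the K\"unneth formula for group cohomology via Benson/Weibel combined with (i), and the periodic cohomology of cyclic groups via Benson) --- exactly the facts you supply explicitly --- and handles (v) identically via flatness of $\cO$ and the annihilator description of $\Tor_1^{\cO}(\cO/a\cO,-)$. The $(t-1)$ bookkeeping you flag does go through once one uses that $v(1-\zeta)$ decreases as the order of the $p$-power root $\zeta$ increases, together with the cyclic $\Ext^3$ values (also read off from your periodic resolution) that feed into the Tor side of the K\"unneth sequence.
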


\begin{proof}
\begin{enumerate}[(i)]
\item This is very well known.
\item This is Shapiro's Lemma.
\item Set $M_1=U_1^*\otimes_{\cO}V_1$ and $M_2=U_2^*\otimes_{\cO}V_2$. The K\"unneth formula for group cohomology gives a split, short exact sequence
\begin{align*}
0& \longrightarrow \bigoplus_{i+j=n} H^i(H_1,M_1)\otimes_{\cO} H^j(H_2,M_2)
\longrightarrow H^n(H,M_1\otimes_{\cO} M_2)\\
&\longrightarrow \bigoplus_{i+j=n+1}\Tor^{\cO}_1(H^i(H_1,M_1),H^j(H_2,M_2)) \longrightarrow 0.
\end{align*}
For an explicit reference see \cite[Exercise 6.1.8]{weibel95}. Also note that in~\cite[Theorem 3.5.6]{benson91} it states that the underlying ring need not be $\ZZ$ but merely a hereditary ring. The result now follows from part (i).
\item This follows easily from the well known description of cohomology groups for cyclic groups (e.g. see \cite[Corollary 3.5.2]{benson91}) and parts (i) and (iii).
\item The first claim follows simply from the fact that $\cO$ is respectively projective/flat over itself. For the second claim we recall that
\begin{align*}
\Tor^{\cO}_1\left(\cO/a\cO,M\right)\simeq\Ann_M(a):=\left\lbrace x\in M\vert ax=0\right\rbrace,
\end{align*}
for any non-zero (divisor) $a\in\cO$ and $\cO$-module $M$. However, if $b\in\cO$ is also non-zero then 
\begin{align*}
\Ann_{\cO/b\cO}(a)\simeq \cO/c\cO \simeq \cO/a\cO\otimes_{\cO}\cO/b\cO,
\end{align*}
where $c$ is either $a$ or $b$ depending on which has the smaller valuation with respect to $J(\cO)$.
\end{enumerate}
\end{proof}

We now introduce a specific set up that will hold for the remainder of this section. Set $G=D\rtimes E$, where $D$ is a finite $p$-group and $E$ a finite $p'$-group. $Z:=C_E(D)$ is a cyclic, central subgroup of $E$ with quotient $L:=E/Z$. We set $B:=\cO(D\rtimes E)e_\varphi$, for some faithful $\varphi\in\Irr(Z)$. Since $D\lhd G$, any block idempotent of $\cO G$ must be supported on $C_G(D)=Z(D)\times Z$ and hence $B$ is indeed a block.
\newline
\newline
We will require this general set up, where $D$ is allowed to be non-abelian, in $\S$\ref{sec:main}. However, for the remainder of this section all the above will hold with the added assumption that $D$ is abelian. In this case, we set $D_1=[D,E]$ and $D_2=C_D(E)$. By~\cite[Theorem 2.3]{gor80}, we have a decomposition $D=D_1\times D_2$. Say $D_1\simeq C_{p^{n_1}}\times\dots\times C_{p^{n_u}}$ and $D_2\simeq C_{p^{n_{u+1}}}\times\dots\times C_{p^{n_t}}$, for some $t\geq u\in\NN_0$ and $n_i\in\NN$. We also demand that all the $n_i>1$, when $p=2$. We record all these assumptions below.

\begin{assumption}\label{ass}
\begin{enumerate}[(i)]
\item $B=\cO(D\rtimes E)e_\varphi$ but $D$ is not assumed to abelian. This set up will be used in $\S$\ref{sec:main}.
\item In addition to (i), $D$ is assumed to be abelian and if $p=2$ we assume that $n_i>1$, for all $1\leq i\leq t$, i.e. $D$ has no direct factor isomorphic to $C_2$. This set up will hold for the remainder of this section.
\end{enumerate}
\end{assumption}

We can describe $\Irr(B)$ very precisely. For any $\lambda\in\Irr(D)$ and $\chi\in\Irr(E_\lambda|\varphi)$ we define $(\lambda,\chi)\in\Irr(D\rtimes E_\lambda|\varphi)$ by
\[
(\lambda,\chi)(xg)=\lambda(x)\chi(g)\text{ for all }x\in D,g\in E_{\lambda}.
\]
Then
\begin{align}\label{algn:chars_D_abelian}
\Irr(B)=\left\{(\lambda,\chi)\uparrow_{D\rtimes E_{\lambda}}^G|\lambda\in\Irr(D),\chi\in\Irr(E_\lambda|\varphi)\right\}.
\end{align}
This was proved in \cite[Lemma 3.2]{livesey19} for $L$ abelian but the proof is identical in this more general setting.

\begin{rem}\label{rem:brauer}
Since $D\lhd G$, $D$ is in the kernel of every Brauer character of $B$ and so we can identify $\IBr(B)$ with $\Irr(E|\varphi)$. Furthermore, through this identification, we can identify the reduction map $\ZZ\Irr(B)\to \ZZ\IBr(B)$ with the restriction map $\ZZ\Irr(B)\to \ZZ\Irr(E|\varphi)$.
\end{rem}

Next we need a lemma concerning $\cO G$-modules with linear source that realise irreducible characters of $B$.

\begin{lem}\label{lem:char_realise}
Let $\eta\in \Irr(B)$.
\begin{enumerate}[(i)]
\item There exists a unique, up to isomorphism, linear source $\cO G$-module $M_\eta$ such that $K\otimes_{\cO}M_\eta$ affords $\eta$.
\item If $\eta$ lifts a Brauer character then $M_\eta$ is the unique, up to isomorphism, $\cO G$-module such that $K\otimes_{\cO}M_\eta$ affords $\eta$.
\end{enumerate}
\end{lem}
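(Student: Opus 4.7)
Using (\ref{algn:chars_D_abelian}) write $\eta=(\lambda,\chi)\uparrow_H^G$ with $H:=D\rtimes E_\lambda$. Since $H$ splits over $D$ and $\lambda$ is $E_\lambda$-stable, the formula $\tilde\lambda(xg):=\lambda(x)$ defines a linear extension of $\lambda$ to $H$. As $E_\lambda$ is a $p'$-group, $\chi$ is afforded by a unique $\cO E_\lambda$-module $V$; inflating $V$ to $\cO H$, set
\[
W:=\cO_{\tilde\lambda}\otimes_\cO V,\qquad M_\eta:=W\uparrow_H^G.
\]
By construction $K\otimes M_\eta$ affords $\eta$. Moreover, any $\cO G$-module $N$ with $K\otimes N$ irreducible is indecomposable, since $\End_{\cO G}(N)$ embeds in $\End_{KG}(K\otimes N)=K$ as a finitely generated $\cO$-subalgebra and hence equals $\cO$; this applies both to $M_\eta$ and to any candidate $M$ affording $\eta$. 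To see that $M_\eta$ has linear source, note that $D$ is a normal Sylow $p$-subgroup of $G$, so the vertex $Q$ of $M_\eta$ lies in $D$, while by Mackey
\[
M_\eta\downarrow_D\cong\bigoplus_{e\in[E/E_\lambda]}\cO_{{}^e\lambda}^{\oplus\chi(1)}.
\]
Restricting further to $Q$ gives a direct sum of linear characters of $Q$, so its direct summand, the source, is linear.

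For the uniqueness in (ii), the Brauer character of $\overline M$ equals the irreducible Brauer character $\overline\eta$, forcing $\overline M$ to be the unique simple $kG$-module $S$ with Brauer character $\overline\eta$ (since a sum of positive multiplicities of Brauer characters of simples can equal an irreducible Brauer character only trivially). The $\cO$-module $\Hom_{\cO G}(M_\eta,M)$ sits inside $K=\End_{KG}(K\otimes M_\eta)$ as $\pi^n\cO$ for some $n$, and has a generator $f$ with $\overline f\neq 0$; by Schur's lemma $\overline f\in\End_{kG}(S)=k$ is a scalar automorphism, so by Nakayama $f:M_\eta\to M$ is itself an isomorphism.

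For the uniqueness in (i), both $\overline{M_\eta}$ and $\overline M$ are trivial source $kG$-modules (since linear characters of $p$-groups reduce to the trivial character) with the same Brauer character $\overline\eta$, hence isomorphic by the classical theorem that trivial source $kG$-modules are determined up to isomorphism by their Brauer characters. Upgrading this to $M\cong M_\eta$ is the main obstacle: without simplicity of $\overline M$ the Schur argument of (ii) cannot be applied directly, and the linear source hypothesis is needed essentially — for instance, by tensoring $M$ and $M_\eta$ by the inverse of a suitable linear character on a vertex-neighbourhood to reduce to Brou\'e's uniqueness of $\cO$-lifts of trivial source $kG$-modules, or by appealing to a classification of linear source modules for blocks with normal defect group.
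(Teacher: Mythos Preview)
Your construction of $M_\eta$ and your argument that it has linear source are correct and essentially match the paper. Your proof of (ii) is also correct, though it takes a different route: you argue directly via $\Hom_{\cO G}(M_\eta,M)\cong\cO$, pick a generator $f$ with $\overline f\neq 0$, and use Schur's lemma plus Nakayama. The paper instead surjects the projective cover $P_\mu$ onto $\overline U$, lifts to $P_\mu\to U$, and uses that $\eta$ appears exactly once in the character of $K\otimes P_\mu$. Both are clean; yours is arguably the more elementary of the two.

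The genuine gap is the uniqueness in (i), and you flag it yourself. Knowing $\overline M\cong\overline{M_\eta}$ as trivial-source $kG$-modules does not by itself lift to an isomorphism over $\cO$, and your two proposed workarounds are not proofs. The twisting idea runs into the problem that $\lambda$ is only a character of $D$, not of $G$, so there is no global rank-one module to tensor with; one would have to pass to $H=D\rtimes E_\lambda$ and control what happens under induction, which is precisely the work you are trying to avoid. The ``appeal to a classification'' is circular here, since this lemma \emph{is} that classification.

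The paper's argument is more direct and avoids the lifting problem entirely. One first observes that any source of $M$ must be of the form $\cO_{\lambda'}$ with $\lambda'\sim_E\lambda$ (the vertex is all of $D$ because $\eta$ has $p'$-degree, and then the linear source, being a summand of $M\downarrow_D$, must afford one of the $E$-conjugates of $\lambda$). Hence $M$ is a direct summand of $\cO_\lambda\uparrow_D^G$. Now decompose $\cO_\lambda\uparrow_D^{D\rtimes E_\lambda}$ as $\bigoplus_{\chi'}V_{\chi'}$ and note that each $V_{\chi'}\uparrow_{D\rtimes E_\lambda}^G$ is indecomposable; a short inner-product computation with $\langle(\lambda,\chi'),(\lambda,\chi)\rangle_{D\rtimes E_\lambda}$ forces $\chi'=\chi$, whence $M\cong M_\eta$. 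This is the missing idea in your proposal: rather than trying to lift an isomorphism from $k$ to $\cO$, one pins $M$ down as a summand of an explicit induced module whose decomposition is completely understood.
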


\begin{proof}
\begin{enumerate}[(i)]
\item Suppose $\eta=(\lambda,\chi)\uparrow_{D\rtimes E_\lambda}^G$, for some $\lambda\in \Irr(D)$ and $\chi\in \Irr(E_\lambda|\varphi)$. Define $V_\chi$ to be an $\cO E_\lambda$-module such that $K\otimes_{\cO}V_\chi$ affords $\chi$. We can extend $V_\chi$ to $\cO(D\rtimes E_\lambda)$ by letting each $x\in D$ act via scalar multiplication by $\lambda(x)$. (Since $E_\lambda$ is a $p'$-group, $V_\chi$ is uniquely defined by the above.) Now set $M_\eta=V_\chi\uparrow_{D\rtimes E_\lambda}^G$. Certainly $V_\chi$ and therefore $M_\eta$ has linear source.
\newline
\newline
Next let $M_\eta'$ be another linear source $\cO G$-module such that $K\otimes_{\cO}M_\eta'$ affords $\eta$. Since any source of $M_\eta'$ must be of the form $\cO_{\lambda'}$ for some $\lambda'\sim_E \lambda$, $M_\eta'$ must be a direct summand of $\cO_\lambda\uparrow_D^G$. Now each summand of $\cO_\lambda\uparrow_D^{D\rtimes E_\lambda}$ is of the form $V_{\chi'}$ from the above paragraph, for some $\chi'\in\Irr(E_\lambda)$. Since each $V_{\chi'}\uparrow_{D\rtimes E_\lambda}^G$ is indecomposable, $M_\eta'\simeq V_{\chi'}\uparrow_{D\rtimes E_\lambda}^G$ for some $\chi'\in\Irr(E_\lambda)$. Fixing said $\chi'$, we must have
\begin{align*}
1&=\langle(\lambda,\chi')\uparrow_{D\rtimes E_\lambda}^G,(\lambda,\chi)\uparrow_{D\rtimes E_\lambda}^G\rangle_G=\langle(\lambda,\chi'),(\lambda,\chi)\uparrow_{D\rtimes E_\lambda}^G\downarrow^G_{D\rtimes E_\lambda}\rangle_{D\rtimes E_\lambda}\\
&=\langle(\lambda,\chi'),(\lambda,\chi)\rangle_{D\rtimes E_\lambda},
\end{align*}
where the final equality follows since $(\lambda,\chi)\downarrow^{D\rtimes E_\lambda}_D=|E_\lambda|.\lambda$ and the stabiliser of $\lambda$ in $G$ is $D\rtimes E_\lambda$. In particular,
\begin{align*}
1\leq\langle(\lambda,\chi')\downarrow^{D\rtimes E_\lambda}_{E_\lambda},(\lambda,\chi)\downarrow^{D\rtimes E_\lambda}_{E_\lambda}\rangle_{E_\lambda}=\langle\chi',\chi\rangle_{E_\lambda}.
\end{align*}
Therefore, $\chi'=\chi$ and $M_\eta'\simeq V_\chi\uparrow_{D\rtimes E_\lambda}^G=M_\eta$, as desired.
\item Suppose $\eta$ lifts $\mu\in\IBr(G)$ and $U$ is an $\cO G$-module such that $K\otimes_{\cO}U$ affords $\eta$. Let $P_\mu$ be the projective indecomposable $\cO G$-module corresponding to $\mu$. So $\overline{P}_\mu$ is the projective indecomposable $kG$-module corresponding to $\mu$. In particular, we have a surjective $\cO G$-module homomorphism $f:P_\mu\to \overline{U}$. Therefore, since $P_\mu$ is projective, there exists an $\cO G$-module homomorphism $h:P_\mu\to U$ such that $\overline{\phantom{A}}\circ h=f$ and, by Nakayama's lemma, $h$ must surject onto $U$. The proof is completed by observing that, since $\eta$ lifts $\mu$, $\eta$ appears with multiplicity one in the character of $K\otimes_{\cO}P_\mu$ and so $P_\mu$ has a unique $\cO$-free quotient whose $K$-span affords $\eta$.
\end{enumerate}
\end{proof}

We adopt the notation of $M_\eta$, from the above lemma, for the remainder of this section. In particular, for $n\in\NN_0$ and $\chi,\eta\in\Irr(G)$, $\Ext_{\cO G}^n(\chi,\eta)$ will denote $\Ext_{\cO G}^n(M_\chi,M_\eta)$. We use an analogous convention for $\Ext_{kG}^n(\mu,\psi)$, where $\mu,\psi\in \IBr(B)$.

\begin{lem}\label{lem:kuenn} Let $\chi=\alpha\otimes \theta_{\chi}$, $\eta=\beta\otimes \theta_{\eta}\in\Irr(B)$, for some $\alpha,\beta\in\Irr(D_1\rtimes E|\varphi)$ and $\theta_\chi,\theta_\eta\in\Irr(D_2)$.
\begin{enumerate}[(i)]
\item For $0\leq i\leq 2$, each $\Ext_{\cO G}^i(\chi,\eta)$ is a finite direct sum of $\cO$-modules of the form $\cO/a\cO$, for some $a\in \cO$. Moreover, if $i=0$, all the $a$'s can be chosen to be zero, if $i=1$, then all the $a$'s can be chosen to be of the form $1-\zeta$, for some non-trivial $p^{\nth}$-power roots of unity $\zeta$ and if $i=2$, then all the $a$'s can be chosen to be non-zero. In particular, $\Ext_{\cO G}^i(\chi,\eta)$ has a direct summand isomorphic to $\cO$ only if $i=0$ and in this case
\begin{align*}
\Ext_{\cO G}^0(\chi,\eta)\simeq \cO^{\oplus\delta_{\chi,\eta}}.
\end{align*}
\item If $\theta_{\chi}=\theta_{\eta}$, then
\begin{align*}
&\Ext^2_{\cO G}(\chi,\eta)\simeq \left[\bigoplus_{i=u+1}^t \faktor{\cO}{p^{n_i}\cO}\otimes_{\cO} \Ext_{\cO(D_1\rtimes E)}^0(\alpha,\beta)\right]\\
&\oplus \Ext_{\cO(D_1\rtimes E)}^2(\alpha,\beta)\oplus\left[\bigoplus_{i=u+1}^t \faktor{\cO}{p^{n_i}\cO}\otimes_{\cO}\Ext_{\cO(D_1\rtimes E)}^1(\alpha,\beta)\right].
\end{align*}
\item If $\theta_{\chi}\neq \theta_{\eta}$ and $\theta_{\chi}^{-1}.\theta_{\eta}$ has image $\{\zeta^i\}_{i\in\ZZ}$, for some $p^{\nth}$-power root of unity $\zeta$, then
\begin{align*}
&\Ext^2_{\cO G}(\chi,\eta)\simeq \left[\faktor{\cO}{(1-\zeta)\cO}\otimes_{\cO}\Ext_{\cO(D_1\rtimes E)}^0(\alpha,\beta)\right]^{\oplus(t-u-1)}\\
\oplus&\left[\faktor{\cO}{(1-\zeta)\cO}\otimes_{\cO}\Ext_{\cO(D_1\rtimes E)}^1(\alpha,\beta)\right]^{\oplus (t-u)}
\oplus\left[\faktor{\cO}{(1-\zeta)\cO}\otimes_{\cO}\Ext_{\cO(D_1\rtimes E)}^2(\alpha,\beta)\right].
\end{align*}
\end{enumerate}
\end{lem}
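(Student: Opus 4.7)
The proof proceeds by applying the Künneth formula to the direct product decomposition $G=G_1\times D_2$, where $G_1:=D_1\rtimes E$ and $D_2$ is central in $G$. First I would identify $M_\chi\simeq M_\alpha\otimes_\cO\cO_{\theta_\chi}$ and $M_\eta\simeq M_\beta\otimes_\cO\cO_{\theta_\eta}$ via Lemma~\ref{lem:char_realise}(i): the external tensor product is a linear source $\cO G$-module (its vertex is the product of the component vertices and its source is rank one, hence linear) realising the character $\chi$, so the uniqueness assertion forces the isomorphism. Lemma~\ref{lem:ext_groups}(iii) then yields, for each $n$, a natural decomposition
\[
\Ext^n_{\cO G}(\chi,\eta)\simeq\bigoplus_{i+j=n}\Ext^i_{\cO G_1}(\alpha,\beta)\otimes_\cO\Ext^j_{\cO D_2}(\theta_\chi,\theta_\eta)\oplus\bigoplus_{i+j=n+1}\Tor^\cO_1\bigl(\Ext^i_{\cO G_1}(\alpha,\beta),\Ext^j_{\cO D_2}(\theta_\chi,\theta_\eta)\bigr).
\]

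For parts (ii) and (iii) I would then substitute the explicit values of $\Ext^j_{\cO D_2}(\theta_\chi,\theta_\eta)$ coming from Lemma~\ref{lem:ext_groups}(iv) and simplify the Tor summands via Lemma~\ref{lem:ext_groups}(v). In (ii), $\Ext^0_{\cO D_2}=\cO$ is free and $\Ext^1_{\cO D_2}=0$, so at $n=2$ every Tor contribution vanishes except $\Tor^\cO_1(\Ext^1_{\cO G_1}(\alpha,\beta),\Ext^2_{\cO D_2}(\theta_\chi,\theta_\eta))$; since $\Ext^1_{\cO G_1}(\alpha,\beta)$ is torsion (annihilated by $|G_1|$) and hence a direct sum of cyclic $\cO$-modules, Lemma~\ref{lem:ext_groups}(v) identifies this Tor with the plain tensor $\Ext^1_{\cO G_1}(\alpha,\beta)\otimes_\cO\bigoplus_{i=u+1}^t\cO/p^{n_i}\cO$. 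Combining with the main Künneth sum produces (ii). Part (iii) is handled analogously: this time $\Ext^0_{\cO D_2}=0$, and after collecting the surviving tensor and Tor contributions indexed by $\Ext^i_{\cO G_1}(\alpha,\beta)$ for $i=0,1,2$ the multiplicities $t-u-1$, $t-u$, and $1$ fall out exactly as stated.

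For part (i), $\Ext^0_{\cO G}(\chi,\eta)=\Hom_{\cO G}(M_\chi,M_\eta)$ is torsion-free as an $\cO$-submodule of $\Hom_\cO(M_\chi,M_\eta)$ and has $K$-rank $\langle\chi,\eta\rangle_G=\delta_{\chi,\eta}$, so it is free of that rank. For $i=1,2$ the group $\Ext^i_{\cO G}(\chi,\eta)$ is annihilated by $|G|$ and finitely generated over $\cO$, hence a finite direct sum of cyclic modules $\cO/a\cO$ with $a\neq 0$; this settles the case $i=2$. The refined claim for $i=1$ reduces, through the Künneth decomposition and Lemma~\ref{lem:ext_groups}(iv)--(v), to showing that $\Ext^1_{\cO G_1}(\alpha,\beta)$ is itself a direct sum of modules $\cO/(1-\zeta)\cO$ with $\zeta$ a nontrivial $p^\nth$-power root of unity. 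I would establish this via the Lyndon--Hochschild--Serre spectral sequence for $1\to D_1\to G_1\to E\to 1$: since $|E|$ is invertible in $\cO$, one has $H^n(G_1,M)\simeq H^n(D_1,M)^E$ for every $\cO G_1$-module $M$. Applying this with $M=M_\alpha^*\otimes_\cO M_\beta$, whose restriction to $D_1$ is a direct sum of characters $\cO_\nu$ (as $M_\alpha$ and $M_\beta$ are linear source), Lemma~\ref{lem:ext_groups}(iv) identifies each $H^1(D_1,\cO_\nu)$ as $\cO/(1-\zeta_\nu)\cO$ when $\nu\neq 1$ and as zero when $\nu=1$; the $E$-fixed points remain of this form because the Reynolds idempotent $|E|^{-1}\sum_{e\in E}e$ realises them as an $\cO$-direct summand of a direct sum of such cyclic modules, hence again such a direct sum by Krull--Schmidt over the complete DVR $\cO$. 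This spectral-sequence step is the main technical obstacle; the rest is direct substitution.
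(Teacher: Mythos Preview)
Your proof is correct. Parts (ii) and (iii) proceed exactly as in the paper: apply the split K\"unneth sequence of Lemma~\ref{lem:ext_groups}(iii) to the factorisation $G=(D_1\rtimes E)\times D_2$, feed in the explicit values of $\Ext^j_{\cO D_2}(\theta_\chi,\theta_\eta)$ from Lemma~\ref{lem:ext_groups}(iv), and convert the surviving $\Tor_1$ terms into tensor products via Lemma~\ref{lem:ext_groups}(v).

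For part (i) your argument is also valid, but it is considerably more elaborate than the paper's. The paper avoids the K\"unneth reduction and the Lyndon--Hochschild--Serre spectral sequence entirely: since $M_\chi$ and $M_\eta$ are linear source, each is a direct summand of some $\cO_{\lambda_i}\!\uparrow_D^G$, and Shapiro's lemma (Lemma~\ref{lem:ext_groups}(ii)) gives
\[
\Ext_{\cO G}^i(\cO_{\lambda_1}\!\uparrow_D^G,\cO_{\lambda_2}\!\uparrow_D^G)\simeq \Ext_{\cO D}^i(\cO_{\lambda_1},\cO_{\lambda_2}\!\uparrow_D^G\!\downarrow^G_D),
\]
which is a direct sum of groups $\Ext_{\cO D}^i(\cO_{\lambda_1},\cO_{{}^g\lambda_2})$ computed entirely over the abelian $p$-group $D$ by Lemma~\ref{lem:ext_groups}(iv). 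One then passes to the relevant direct summand. The underlying Krull--Schmidt step (a direct summand of $\bigoplus_j\cO/(1-\zeta_j)\cO$ is again of that shape) is the same in both approaches, but the paper's route reaches it in one move rather than via a collapse of the spectral sequence for $D_1\lhd D_1\rtimes E$. Your approach has the minor advantage of making explicit why the $i=2$ case needs no finer information than ``torsion'', whereas the paper's single Shapiro reduction handles $i=0,1,2$ uniformly.
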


\begin{proof}
\begin{enumerate}[(i)]
\item 
%It is well known that 
%\[
%\Ext^2_{\cO G}(\chi,\eta)\simeq\Ext^2_{\cO G}\left(1_G,\chi^{*}\otimes\eta\right)\simeq H^2\left(G,\chi^{*}\otimes\eta\right),
%\]
%so everything boils down to calculating some cohomology groups. We start by mentioning the K\"unneth formula for group cohomology, since it will be a crucial tool for this proof. If $H_1,H_2$ are finite groups and $M_1,M_2$ respectively $\cO H_1,\cO H_2$-modules, then there is a split, short exact sequence
%\[
%\begin{tikzcd}[column sep=tiny]
%0 \arrow[r] & {\bigoplus\limits_{i+j=n} H^i(H_1,M_1)\otimes_{\cO} H^j(H_2,M_2)} \arrow[r] & {H^n\left(H_1\times H_2,M_1\otimes_{\cO}M_2\right)}\\
%\arrow[r] & {\bigoplus\limits_{i+j=n+1}\Tor^{\cO}_1(H^i(H_1,M_1),H^j(H_2,M_2))} \arrow[r] & 0.
%\end{tikzcd}
%\]
%For an explicit reference see \cite[Exercise 6.1.8]{weibel95}. Also note that in~\cite[Theorem 3.5.6]{benson91} it states that the underlying ring need not be $\ZZ$ but merely a hereditary ring. If we apply the K\"unneth formula to our case we get the split, exact sequence

Note that for any $0\leq i\leq 2$ and $\lambda_1,\lambda_2\in\Irr(D)$, by Lemma \ref{lem:ext_groups}(iii),
\begin{align*}
\Ext_{\cO G}^i(\cO_{\lambda_1}\uparrow_D^G,\cO_{\lambda_2}\uparrow_D^G)\simeq \Ext_{\cO D}^i(\cO_{\lambda_1},\cO_{\lambda_2}\uparrow_D^G\downarrow^G_D).
\end{align*}
The claim now follows, by Lemma \ref{lem:ext_groups}(iv), since $M_\chi$ and $M_\eta$ both have linear source. That $\Ext_{\cO G}^0$ is of the desired form, can be seen by noting that $\Ext_{\cO G}^0=\Hom_{\cO G}$.
\item[(ii),(iii)] Applying Lemma~\ref{lem:ext_groups}(iii) we get the split, short exact sequence
\begin{align}
\begin{split}\label{tikz:kunn}
0 &\longrightarrow \bigoplus_{i+j=2} \Ext_{\cO(D\rtimes E)}^i(\alpha,\beta)\otimes_{\cO} \Ext_{\cO D_2}^j(\theta_{\chi},\theta_{\eta})
\longrightarrow \Ext_{\cO G}^2(\alpha\otimes\theta_\chi,\beta\otimes\theta_{\eta})\\
&\longrightarrow \bigoplus_{i+j=3}\Tor^{\cO}_1(\Ext_{\cO(D\rtimes E)}^i(\alpha,\beta),\Ext_{\cO D_2}^j(\theta_{\chi},\theta_{\eta})) \longrightarrow 0.
\end{split}
\end{align}
%We next calculate the cohomology groups for $\cO D_2$-modules. The calculation follows easily from the well known description of cohomology groups for cyclic groups (e.g. see \cite[Corollary 3.5.2]{benson91}) and K\"unneth formula, so we just state the result here. Let $\lambda\in \Irr(D_2)$, then
%\begin{align}
%\begin{split}\label{H_D_2}
%&H^0(D_2,\lambda)=
%\begin{cases}
%\cO&\text{if }\lambda\text{ is trivial}\\
%\{0\}&\text{otherwise}\\
%\end{cases},\,
%H^1(D_2,\lambda)=
%\begin{cases}
%\{0\}&\text{if }\lambda\text{ is trivial}\\
%\cO/(1-\zeta)\cO&\text{otherwise}\\
%\end{cases},\\
%&H^2(D_2,\lambda)=
%\begin{cases}
%\bigoplus\limits_{i=u+1}^t\cO/p^{n_i}\cO&\text{if }\lambda\text{ is trivial}\\
%[\cO/(1-\zeta)\cO]^{\oplus(t-u-1)}&\text{otherwise} \\
%\end{cases},
%\end{split}
%\end{align}
%where if $\lambda$ is non-trivial then it is assumed to have image $\{\zeta^i\}_{i\in\ZZ}$.
We now need only apply Lemma \ref{lem:ext_groups}(iv) and (v) and part (i) of the current lemma to (\ref{tikz:kunn}) to obtain the result.
\end{enumerate}
\end{proof}

\begin{lem}\label{lem:Ext1_Ext2}
\begin{enumerate}[(i)]
\item Let $\chi,\eta\in\Irr(B)$ be such that $\overline{\chi}$ and $\overline{\eta}$ have no irreducible constituents in common. Then $$k\otimes_{\cO}\Ext_{\cO G}^2(\chi,\eta)\simeq \Ext_{kG}^1(\overline{\chi},\overline{\eta}).$$
\item The graph with vertices labelled by $\IBr(B)$ and an edge between $\mu,\psi\in\IBr(B)$ if $\Ext_{kG}^1(\mu,\psi)\neq \{0\}$, is connected.
\end{enumerate}
\end{lem}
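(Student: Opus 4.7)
For part (i), my plan is to derive a universal-coefficient-style short exact sequence comparing $\cO G$-Ext and $kG$-Ext, and then combine it with the explicit description provided by Lemma~\ref{lem:kuenn}(i). Concretely, fix a projective $\cO G$-resolution $P_\bullet \to M_\chi$: since each $P_i$ and $M_\chi$ are $\cO$-free, $k \otimes_\cO P_\bullet \to \overline{M_\chi}$ is a projective $kG$-resolution, and the adjunction $\Hom_{kG}(k \otimes_\cO P_i, \overline{M_\eta}) = \Hom_{\cO G}(P_i, \overline{M_\eta})$ identifies $\Ext^n_{kG}(\overline{\chi}, \overline{\eta})$ with $H^n(\Hom_{\cO G}(P_\bullet, \overline{M_\eta}))$. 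Letting $\pi$ be a generator of $J(\cO)$ and applying $\Hom_{\cO G}(P_\bullet, -)$ to $0 \to M_\eta \xrightarrow{\pi} M_\eta \to \overline{M_\eta} \to 0$, the associated long exact sequence in cohomology breaks into the short exact sequence
\[
0 \to k \otimes_\cO \Ext^n_{\cO G}(\chi, \eta) \to \Ext^n_{kG}(\overline{\chi}, \overline{\eta}) \to \Tor^\cO_1(k, \Ext^{n+1}_{\cO G}(\chi, \eta)) \to 0
\]
for each $n \geq 0$.

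The hypothesis now enters via the $n = 0$ sequence: as $\overline{\chi}$ and $\overline{\eta}$ share no composition factors, $\Hom_{kG}(\overline{M_\chi}, \overline{M_\eta}) = 0$, forcing $\Tor^\cO_1(k, \Ext^1_{\cO G}(\chi, \eta)) = 0$. By Lemma~\ref{lem:kuenn}(i), $\Ext^1_{\cO G}(\chi, \eta)$ is a direct sum of modules $\cO/(1-\zeta)\cO$ for non-trivial $p$-power roots of unity $\zeta$; since $1 - \zeta \in J(\cO)$, each such summand has non-zero $\pi$-torsion, and so $\Tor$-vanishing forces $\Ext^1_{\cO G}(\chi, \eta) = 0$ outright. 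The $n = 1$ sequence then collapses to $\Ext^1_{kG}(\overline{\chi}, \overline{\eta}) \simeq \Tor^\cO_1(k, \Ext^2_{\cO G}(\chi, \eta))$. By Lemma~\ref{lem:kuenn}(i) again, $\Ext^2_{\cO G}(\chi, \eta)$ decomposes as a direct sum of modules $\cO/a\cO$ with $a \neq 0$, and on each such summand (unit values contributing $0$) an elementary computation with the resolution $0 \to \cO \xrightarrow{\pi} \cO \to k \to 0$ gives $\Tor^\cO_1(k, \cO/a\cO) \simeq k \simeq k \otimes_\cO \cO/a\cO$; this delivers the desired isomorphism.

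For part (ii), my plan is purely algebraic. The block $\overline{B} = kB$ is an indecomposable $k$-algebra (idempotents lift from $kB$ to $\cO B$, and $B$ is a block), so it suffices to invoke the standard fact that the underlying graph of the Ext-quiver of any indecomposable finite-dimensional $k$-algebra is connected. Briefly, a disconnection $\IBr(B) = X \sqcup Y$ together with the vanishing of $\Ext^1_{kG}(\mu, \psi)$ across $X$ and $Y$ implies, for primitive idempotent lifts $e_\mu$ in the basic algebra of $\overline{B}$, that $e_\psi J(\overline{B}) e_\mu \subseteq e_\psi J(\overline{B})^2 e_\mu$ for all such pairs, and iterating this inclusion together with the nilpotence of $J(\overline{B})$ forces $e_\psi J(\overline{B}) e_\mu = 0$; the idempotent $\sum_{\mu \in X} e_\mu$ is then non-trivial and central, contradicting indecomposability.

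The main technical step is the middle one of part (i): noticing that the no-common-constituent hypothesis, filtered through the $n=0$ universal coefficient sequence, forces the vanishing of $\Ext^1_{\cO G}(\chi, \eta)$ itself (not merely of its reduction), after which the $n=1$ sequence immediately yields the asserted isomorphism. Part (ii) is formal once indecomposability of $\overline{B}$ and the Ext-quiver/connectedness correspondence are in hand.
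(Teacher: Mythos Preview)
Your proof is correct and follows the same overall strategy as the paper---a universal coefficient sequence linking $\Ext^*_{\cO G}$ and $\Ext^*_{kG}$, together with the vanishing of $\Ext^1_{\cO G}(\chi,\eta)$---but with two tactical differences worth recording. First, you derive the universal coefficient short exact sequence directly from the long exact sequence associated to $0\to M_\eta\xrightarrow{\pi}M_\eta\to\overline{M_\eta}\to 0$ and the adjunction $\Hom_{kG}(k\otimes_\cO P_i,\overline{M_\eta})\simeq\Hom_{\cO G}(P_i,\overline{M_\eta})$; the paper instead invokes a general universal coefficient theorem for chain complexes and then has to verify separately that $X\otimes_\cO k$ computes $\Ext^*_{kG}$ (i.e., that $\Hom$'s lift along $\cO\to k$), a step your adjunction argument makes automatic. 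Second, for the vanishing of $\Ext^1_{\cO G}(\chi,\eta)$ the paper argues directly via characters---since $\overline{\chi}$ and $\overline{\eta}$ share no constituents, $\eta$ appears in neither $K\otimes_\cO P_\chi$ nor $K\otimes_\cO\Omega^1(\chi)$, so the relevant $\Hom_{\cO G}$ groups are zero---whereas you deduce it from the $n=0$ sequence and the explicit structure of $\Ext^1_{\cO G}$ given by Lemma~\ref{lem:kuenn}(i). Your derivation is slicker; the paper's route to $\Ext^1=0$ is more self-contained in that it does not require Lemma~\ref{lem:kuenn}. For part~(ii) the paper simply cites Alperin; your sketch is the standard proof that the Ext-quiver of an indecomposable finite-dimensional algebra is connected and is a perfectly good substitute.
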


\begin{proof}
\begin{enumerate}[(i)]
\item Consider a projective resolution for $M_\chi$
\[
\cdots\rightarrow P_1\rightarrow P_0\rightarrow M_{\chi}\rightarrow 0,
\] 
and let $X$ be the corresponding Hom chain complex
\[
\cdots\leftarrow\Hom_{\cO G}(P_1,M_{\eta})\leftarrow\Hom_{\cO G}(P_0, M_{\eta})\leftarrow 0,
\]
where we consider $\Hom_{\cO G}(P_i,M_{\eta})$ to be in position $-i$. By \cite[Theorem 2.7.4]{carlson03}, we have an exact sequence of $k$-modules,
\begin{align}\label{algn:ses}
0\rightarrow H_{-1}(X)\otimes_{\cO}k\rightarrow H_{-1}(X\otimes_{\cO} k)\rightarrow\Tor_1^{\cO}(H_{-2}(X),k)\rightarrow 0.
\end{align}
Of course, $H_{-1}(X)\simeq\Ext^1_{\cO G}(\chi,\eta)$ and $H_{-2}(X)\simeq \Ext^2(\chi,\eta)$. By Lemma \ref{lem:ext_groups}(v) and Lemma \ref{lem:kuenn}(i),
\begin{align*}
\Tor_1^{\cO}(\Ext^2(\chi,\eta),k)\simeq \Ext^2(\chi,\eta)\otimes_{\cO}k.
\end{align*}
It remains to show that the middle term is $\Ext^1_{kG}(\overline{\chi},\overline{\eta})$ and $\Ext^1_{\cO G}(\chi,\eta)=\{0\}$. For the first claim we just need to prove that $X\otimes_{\cO}k$ is the chain complex
\[
\cdots\leftarrow\Hom_{\cO G}(P_1\otimes_{\cO}k,M_{\eta}\otimes_{\cO}k)\leftarrow\Hom_{\cO G}(P_0\otimes_{\cO}k, M_{\eta}\otimes_{\cO}k)\leftarrow 0.
\]
In other words we need to show that every element of
\begin{align*}
\Hom_{kG}(P_i\otimes_{\cO} k,M_\eta\otimes_{\cO}k)
\end{align*}
lifts to an element of $\Hom_{\cO G}(P_i, M_{\eta})$. If $G=D$, then the claim follows, since each $P_i$ is a direct sum of $\cO D$'s,
\begin{align*}
\Hom_{\cO G}(\cO D,\cO_\lambda)=\langle g\mapsto \lambda(g) \rangle_{\cO},
\end{align*}
for any $\lambda\in\Irr(D)$, and
\begin{align*}
\Hom_{kG}(kD,k)=\langle g\mapsto 1 \rangle_k.
\end{align*}
The general case follows from tracing through the series of isomorphisms
\begin{align*}
&\Hom_{\cO G}\left(\cO D\uparrow_{D}^G,\cO_\lambda\uparrow_D^G\right) \otimes_{\cO}k\simeq \Hom_{\cO G}\left(\cO D,\cO_\lambda\uparrow_D^G\downarrow_{D}^G\right)\otimes_{\cO}k\\
\simeq&\Hom_{kG}\left(kD,k_\lambda\uparrow_D^G\downarrow_{D}^G\right) \simeq \Hom_{kG}\left(\cO D\uparrow_{D}^G\otimes_{\cO}k,\cO_\lambda\uparrow_D^G\otimes_{\cO}k\right)
\end{align*}
and noting that every projective $\cO G$-module is a direct summand of $\cO D\uparrow_{D}^G$ and every $M_\eta$ is a direct summand of some $\cO_\lambda\uparrow_D^G$. (Note the first and third isomorphisms follow from Frobenius reciprocity and the second since we already know our claim holds for $G=D$.)
\newline
\newline
We now show that $\Ext^1_{\cO G}(\chi,\eta)=\{0\}$ and this will conclude the proof. Take $P_{\chi}$ a projective cover of $\chi$, then we have an exact sequence
\begin{align*}
0 &\longrightarrow \Hom_{\cO G}(M_\chi,M_\eta) \longrightarrow  \Hom_{\cO G}(P_{\chi},M_\eta) \longrightarrow   \Hom_{\cO G}(\Omega^1(\chi),M_\eta)\\
&\longrightarrow \Ext^1_{\cO G}(M_\chi,M_\eta) \longrightarrow  \Ext^1_{\cO G}(P_{\chi},M_\eta) \longrightarrow \dots
\end{align*}
where $\Omega^1(\chi)$ is the kernel of the natural map $P_\chi\to M_\chi$. We know that $\Hom_{\cO G}(P_{\chi},M_\eta)$ and $\Hom_{\cO G}(\Omega^1(\chi),M_\eta)$ are both zero, since the hypotheses of the lemma ensure that $\eta$ has multiplicity zero in the characters of $K\otimes_{\cO}P_{\chi}$ and $K\otimes_{\cO}\Omega^1(\chi)$. Moreover, as $P_\chi$ is projective, $\Ext_{\cO G}^1(P_\chi,M_\eta)$ is zero and thus $\Ext^1_{\cO G}(\chi,\eta)=\Ext^1_{\cO G}(M_\chi,M_\eta)=\{0\}$ as required.
\item This is \cite[Proposition 4.13.3]{alp93}.
\end{enumerate}
\end{proof}

Before proceeding we need to note the following. If $\zeta\in\cO$ is a primitive $(p^n)^{\nth}$-root of unity, for some $n\in\NN$, then
\begin{align*}
\prod_{\substack{i=1, \\ p\nmid i}}^{p^n-1}(X-\zeta^i)=(X^{p^n}-1)/(X^{p^{n-1}}-1)=\sum_{i=0}^{p-1}X^{ip^{n-1}}\in\ZZ[X].
\end{align*}
In particular,
\begin{align*}
\prod_{\substack{i=1, \\ p\nmid i}}^{p^n-1}(1-\zeta^i)=p
\end{align*}
and so $1-\zeta\in p\cO$ if and only if $p=2$ and $n=2$. This is ultimately the reason that we do not have as strong a theorem in the $p=2$ case.
\newline
\newline
We need one small lemma before continuing.

\begin{lem}\label{lem:fff}
If $\lambda\in \Irr(D_1)$ is $E$-stable, then $\lambda=1_{D_1}$.
\end{lem}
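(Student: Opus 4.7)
The plan is to reduce the statement to the fact that $[D_1,E]=D_1$, which follows from a second application of the Gorenstein decomposition that already defined $D_1$ and $D_2$.

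First I would unwind what it means for $\lambda\in\Irr(D_1)$ to be $E$-stable. Since $D_1$ is abelian, $\lambda$ is a group homomorphism $D_1\to\cO^\times$, and $E$-stability of $\lambda$ says precisely that $\lambda(gxg^{-1})=\lambda(x)$ for all $g\in E$ and $x\in D_1$. Rewritten multiplicatively, this is $\lambda([g,x])=1$ for all such $g,x$, so $\lambda$ must vanish on the subgroup $[D_1,E]\leq D_1$.

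Next I would argue that $[D_1,E]=D_1$. Since $E$ is a $p'$-group acting on the abelian $p$-group $D_1$, \cite[Theorem 2.3]{gor80} gives the decomposition $D_1=[D_1,E]\times C_{D_1}(E)$. However, $C_{D_1}(E)=D_1\cap C_D(E)=D_1\cap D_2=\{1\}$, because the original decomposition $D=D_1\times D_2$ was internal. Hence $D_1=[D_1,E]$.

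Combining the two steps, $\lambda$ is trivial on all of $D_1$, i.e.\ $\lambda=1_{D_1}$. There is essentially no obstacle here; the only subtlety is remembering that Assumption~\ref{ass}(ii) puts us in the abelian-$D$ situation where the Gorenstein decomposition is available and where the definitions of $D_1$ and $D_2$ are literally $[D,E]$ and $C_D(E)$.
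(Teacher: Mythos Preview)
Your argument is correct. The paper does not actually prove this lemma; it simply cites \cite[Lemma 2.4]{livesey19} and remarks that the abelianness hypothesis on $L$ there is never used. Your self-contained proof is exactly the natural one. A minor shortcut: rather than reapplying \cite[Theorem 2.3]{gor80} to $D_1$, you could note directly that $D_1=[D,E]=[D_1,E][D_2,E]=[D_1,E]$ since $D_2=C_D(E)$ forces $[D_2,E]=1$; either way the conclusion is immediate.
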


\begin{proof}
This was proved in \cite[Lemma 2.4]{livesey19} under the assumption that $L$ is abelian. However, this fact was totally unused in the proof.
\end{proof}

\begin{lem}\label{lem:Ext2_non-zero}
Let $\lambda_1,\lambda_2\in\Irr(D)$, $\chi_1\in\Irr(E_{\lambda_1}\vert\varphi)$ and $\chi_2\in\Irr(E_{\lambda_2}\vert\varphi)$.
\begin{enumerate}[(i)]
\item If $p>2$ and $$\Ext_{\cO G}^2((\lambda_1,\chi_1)\uparrow_{D\rtimes E_{\lambda_1}}^G,(\lambda_2,\chi_2)\uparrow_{D\rtimes E_{\lambda_2}}^G)\simeq \bigoplus_{i=1}^s\cO/p^{m_i}\cO,$$ for some $s\in\NN$ and $m_1,\dots,m_s\in\NN$, then $\lambda_1\sim_E \lambda_2$.
\item If $p=2$ and $$\Ext_{\cO G}^2((\lambda_1,\chi_1)\uparrow_{D\rtimes E_{\lambda_1}}^G,(\lambda_2,\chi_2)\uparrow_{D\rtimes E_{\lambda_2}}^G)\simeq \bigoplus_{i=1}^s\cO/p^{m_i}\cO,$$ for some $s\in\NN$ and $m_1,\dots,m_s\in\NN_{>1}$, then $\lambda_1\sim_E \lambda_2$.
\end{enumerate}
\end{lem}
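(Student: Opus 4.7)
The plan is to prove the contrapositive. Using the decomposition $D = D_1 \times D_2$ with $D_1 = [D, E]$ and $D_2 = C_D(E)$, write $\lambda_i = \mu_i \otimes \nu_i$ with $\mu_i \in \Irr(D_1)$ and $\nu_i \in \Irr(D_2)$. Since $E$ centralises $D_2$, every $\nu \in \Irr(D_2)$ is $E$-fixed, so $\lambda_1 \sim_E \lambda_2$ if and only if $\mu_1 \sim_E \mu_2$ and $\nu_1 = \nu_2$. Setting $\alpha = (\mu_1, \chi_1)\uparrow^{D_1 \rtimes E}_{D_1 \rtimes E_{\mu_1}}$ and $\beta = (\mu_2, \chi_2)\uparrow^{D_1 \rtimes E}_{D_1 \rtimes E_{\mu_2}}$, we obtain $\chi = \alpha \otimes \nu_1$ and $\eta = \beta \otimes \nu_2$ in the notation of Lemma \ref{lem:kuenn}. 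My overall aim is to show that, whenever $\lambda_1 \not\sim_E \lambda_2$, every direct $\cO$-summand of $\Ext^2_{\cO G}(\chi, \eta)$ is a cyclic module $\cO/c\cO$ with $v(c) \leq v(1-\zeta)$ for some non-trivial $p^\nth$-power root of unity $\zeta$. By the product identity preceding Lemma \ref{lem:fff}, such $v(1-\zeta)$ is strictly less than $v(p)$ for $p$ odd, and is at most $v(p)$ when $p = 2$ (with equality only when $\zeta$ has order $2$). Since $\cO/c\cO \simeq \cO/p^m\cO$ forces $v(c) = m\,v(p)$, no such summand has the form $\cO/p^m\cO$ with $m \geq 1$ in the odd case, nor with $m \geq 2$ in the even case, contradicting the hypothesis on the shape of $\Ext^2_{\cO G}(\chi, \eta)$.

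The failure of $\lambda_1 \sim_E \lambda_2$ splits into two cases. If $\nu_1 \neq \nu_2$, then Lemma \ref{lem:kuenn}(iii) expresses $\Ext^2_{\cO G}(\chi, \eta)$ as a direct sum of terms $\cO/(1-\zeta)\cO \otimes_\cO \cO/a\cO \simeq \cO/c\cO$ with $v(c) = \min(v(1-\zeta), v(a)) \leq v(1-\zeta)$, closing this case. If instead $\nu_1 = \nu_2$ while $\mu_1 \not\sim_E \mu_2$, then $\alpha \neq \beta$, so $\Ext^0(\alpha,\beta) = 0$ by Lemma \ref{lem:kuenn}(i), and Lemma \ref{lem:kuenn}(ii) reduces the computation to
\[
\Ext^2_{\cO G}(\chi, \eta) \simeq \Ext^2_{\cO(D_1 \rtimes E)}(\alpha, \beta) \oplus \bigoplus_{i=u+1}^t \cO/p^{n_i}\cO \otimes_\cO \Ext^1_{\cO(D_1 \rtimes E)}(\alpha, \beta).
\]
Each constituent $\cO/p^{n_i}\cO \otimes_\cO \cO/(1-\zeta')\cO \simeq \cO/(1-\zeta')\cO$ of the right-hand summand (using $v(1-\zeta') \leq v(p) \leq v(p^{n_i})$) is handled exactly as in the first case.

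The main obstacle is the $\Ext^2_{\cO(D_1 \rtimes E)}(\alpha, \beta)$ summand, which a priori carries no $\cO/(1-\zeta)\cO$ tensor factor. To deal with it, I would invoke the construction in the proof of Lemma \ref{lem:char_realise}(i): $M_\alpha$ is a direct $\cO(D_1 \rtimes E)$-summand of $\cO_{\mu_1}\uparrow^{D_1 \rtimes E}_{D_1}$, and similarly $M_\beta$ is a direct summand of $\cO_{\mu_2}\uparrow^{D_1 \rtimes E}_{D_1}$. Consequently $\Ext^2_{\cO(D_1 \rtimes E)}(\alpha, \beta)$ is an $\cO$-direct summand of $\Ext^2_{\cO(D_1 \rtimes E)}(\cO_{\mu_1}\uparrow, \cO_{\mu_2}\uparrow)$, which by Shapiro's Lemma (Lemma \ref{lem:ext_groups}(ii)) together with the Mackey decomposition for the normal subgroup $D_1 \lhd D_1 \rtimes E$ equals
\[
\bigoplus_{e \in E} \Ext^2_{\cO D_1}(\cO_{\mu_1}, \cO_{{}^e\mu_2}).
\]
Since $\mu_1 \not\sim_E \mu_2$ forces $\mu_1 \neq {}^e\mu_2$ for every $e \in E$, Lemma \ref{lem:ext_groups}(iv) identifies each term as $[\cO/(1-\zeta_e)\cO]^{\oplus(u-1)}$ for suitable $p^\nth$-power roots of unity $\zeta_e$. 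Invariant-factor uniqueness over the PID $\cO$ then forces $\Ext^2_{\cO(D_1 \rtimes E)}(\alpha, \beta)$ to decompose as $\bigoplus_j \cO/(1-\zeta_{e_j})\cO$, and the first-paragraph observation closes the argument.
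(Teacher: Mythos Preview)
Your proof is correct, and it takes a genuinely different route from the paper's for the key step of controlling $\Ext^2_{\cO(D_1\rtimes E)}(\alpha,\beta)$. The paper argues by induction on $|E|$: it applies Shapiro's Lemma and Mackey to descend from $D_1\rtimes E$ to $D_1\rtimes E_{\alpha_1}$, picks out a summand that still has the $\bigoplus_i\cO/p^{m_i}\cO$ shape, and inducts (with Lemma~\ref{lem:fff} handling the degenerate case $E_{\alpha_1}=E$, and a symmetric swap of the two arguments when needed). You instead embed $M_\alpha$ and $M_\beta$ as $\cO(D_1\rtimes E)$-summands of $\cO_{\mu_1}\!\uparrow$ and $\cO_{\mu_2}\!\uparrow$, apply Shapiro and Mackey once to the full induced modules to get $\bigoplus_{e\in E}\Ext^2_{\cO D_1}(\cO_{\mu_1},\cO_{{}^e\mu_2})$, and then invoke the structure theorem over the DVR $\cO$: since a direct summand of $\bigoplus_i\cO/\pi^{a_i}\cO$ has invariant factors drawn from the multiset $\{a_i\}$, every cyclic piece of $\Ext^2_{\cO(D_1\rtimes E)}(\alpha,\beta)$ has valuation at most $\max_e v(1-\zeta_e)$. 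Your approach is shorter, avoids Lemma~\ref{lem:fff} entirely, and makes the valuation bound transparent; the paper's induction, on the other hand, tracks more precisely which subgroup of $E$ is carrying the obstruction, which is closer in spirit to how the same bookkeeping is reused in the proof of Proposition~\ref{prop:good}.

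One small remark: your sentence ``Invariant-factor uniqueness \ldots\ forces $\Ext^2_{\cO(D_1\rtimes E)}(\alpha,\beta)$ to decompose as $\bigoplus_j\cO/(1-\zeta_{e_j})\cO$'' is in fact literally correct over a DVR (not merely up to a valuation bound), since a direct summand of a finite $\cO$-module picks out a sub-multiset of the elementary divisors; you might say this explicitly, as it is the point where the PID hypothesis is doing real work.
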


\begin{proof}
We prove only the $p>2$ case. The $p=2$ case follows in a similar fashion. If $\zeta\in\cO$ is a primitive $(p^n)^{\nth}$-root of unity, for some $n\in\NN$, the only difference between the two cases is that, due to the comments preceding the lemma, $1-\zeta \notin p\cO$ if $p>2$ but when $p=2$ we can have $1-\zeta \in 2\cO$ but not $1-\zeta\in 4\cO$.
\newline
\newline
The claim follows immediately if $(\lambda_1,\chi_1)\uparrow_{D\rtimes E_{\lambda_1}}^G=(\lambda_2,\chi_2)\uparrow_{D\rtimes E_{\lambda_2}}^G$ so let's assume from now on that $(\lambda_1,\chi_1)\uparrow_{D\rtimes E_{\lambda_1}}^G\neq(\lambda_2,\chi_2)\uparrow_{D\rtimes E_{\lambda_2}}^G$.
\newline
\newline
Suppose $\lambda_i=\alpha_i\otimes \beta_i$, for $i=1,2$, $\alpha_i\in\Irr(D_1)$ and $\beta_i\in\Irr(D_2)$. Then
\begin{align*}
&\Ext_{\cO G}^2((\lambda_1,\chi_1)\uparrow_{D\rtimes E_{\lambda_1}}^G,(\lambda_2,\chi_2)\uparrow_{D\rtimes E_{\lambda_2}}^G)\\
\simeq& \Ext_{\cO G}^2((\alpha_1,\chi_1)\uparrow_{D_1\rtimes E_{\alpha_1}}^{D_1\rtimes E}\otimes\beta_1,(\alpha_2,\chi_2)\uparrow_{D_1\rtimes E_{\alpha_2}}^{D_1\rtimes E}\otimes\beta_2).
\end{align*}
Therefore, by the hypotheses of the lemma and the comments preceding it, Lemma~\ref{lem:kuenn}(iii) gives that $\beta_1=\beta_2$. Furthermore, by Lemma~\ref{lem:kuenn}(ii),
\begin{align*}
\Ext_{\cO(D_1\rtimes E)}^2((\alpha_1,\chi_1)\uparrow_{D_1\rtimes E_{\alpha_1}}^{D_1\rtimes E},(\alpha_2,\chi_2)\uparrow_{D_1\rtimes E_{\alpha_2}}^{D_1\rtimes E})\simeq \bigoplus_{i=1}^s\cO/p^{m_i}\cO.
\end{align*}
Note that, since we are assuming the two characters are different, the $\Ext^0$ term in Lemma~\ref{lem:kuenn}(ii) is just $\{0\}$. Also, by Lemma~\ref{lem:kuenn}(i), the hypotheses of the lemma and the comments preceding it, the $\Ext^1$ term in Lemma~\ref{lem:kuenn}(ii) is also $\{0\}$. We now need to show that $\alpha_1\sim_E\alpha_2$. Lemma \ref{lem:ext_groups}(ii) gives
\begin{align}
\begin{split}\label{algn:frob_recip}
&\Ext_{\cO(D_1\rtimes E)}^2((\alpha_1,\chi_1)\uparrow_{D_1\rtimes E_{\alpha_1}}^{D_1\rtimes E},(\alpha_2,\chi_2)\uparrow_{D_1\rtimes E_{\alpha_2}}^{D_1\rtimes E})\\
\simeq& \Ext_{\cO(D_1\rtimes E_{\alpha_1})}^2((\alpha_1,\chi_1),(\alpha_2,\chi_2)\uparrow_{D_1\rtimes E_{\alpha_2}}^{D_1\rtimes E}\downarrow_{D_1\rtimes E_{\alpha_1}}^{D_1\rtimes E})
\end{split}
\end{align}
and, by the Mackey formula,
\begin{align*}
(\alpha_2,\chi_2)\uparrow_{D\rtimes E_{\alpha_2}}^{D_1\rtimes E}\downarrow_{D\rtimes E_{\alpha_1}}^{D_1\rtimes E}=\sum_i({}^{g_i}\alpha_2,\chi_{g_i})\uparrow_{D_1\rtimes ({}^{g_i}E_{\alpha_2}\cap E_{\alpha_1})}^{D_1\rtimes E_{\alpha_1}},
\end{align*}
for some $g_i$'s in $E$ and $\chi_{g_i}\in\Irr({}^{g_i}E_{\alpha_2}\cap E_{\alpha_1},\varphi)$. Therefore, by the hypotheses of the lemma, there must exist some $i$ with
\begin{align}\label{algn:ind_step}
\Ext_{\cO (D_1\rtimes E)}^2((\alpha_1,\chi_1),({}^{g_i}\alpha_2,\chi_{g_i})\uparrow_{D_1\rtimes ({}^{g_i}E_{\alpha_2}\cap E_{\alpha_1})}^{D_1\rtimes E_{\alpha_1}})\simeq\bigoplus_{i=1}^{s'}\cO/p^{m_i'}\cO,
\end{align}
for some $s'\in\NN$ and $m_1',\dots,m_s'\in\NN$. 
We now proceed by induction on $|E|$. We first note that the result holds in the base case $E=Z$ by Lemma~\ref{lem:ext_groups}(iii) and (iv). Next, we see that we can perform the inductive step, using (\ref{algn:ind_step}), unless $E_{\alpha_1}=E$. In this case, by Lemma~\ref{lem:fff}, $\alpha_1=1_{D_1}$. We now reverse the roles of $(\alpha_1,\chi_1)\uparrow_{D_1\rtimes E_{\alpha_1}}^{D_1\rtimes E}$ and $(\alpha_2,\chi_2)\uparrow_{D_1\rtimes E_{\alpha_2}}^{D_1\rtimes E}$ in (\ref{algn:frob_recip}), i.e.
\begin{align*}
&\Ext_{\cO(D_1\rtimes E)}^2((\alpha_1,\chi_1)\uparrow_{D_1\rtimes E_{\alpha_1}}^{D_1\rtimes E},(\alpha_2,\chi_2)\uparrow_{D_1\rtimes E_{\alpha_2}}^{D_1\rtimes E})\\
\simeq& \Ext_{\cO(D_1\rtimes E_{\alpha_2})}^2((\alpha_1,\chi_1)\uparrow_{D_1\rtimes E_{\alpha_1}}^{D_1\rtimes E}\downarrow_{D_1\rtimes E_{\alpha_2}}^{D_1\rtimes E},(\alpha_2,\chi_2)),
\end{align*}
and proceed as above. This time the claim follows by induction unless $E_{\alpha_2}=E$. In this case we have $\alpha_1=\alpha_2=1_{D_1}$ and the result is proved.
\end{proof}

\begin{property}\label{property:good}
We say a subset $X\subseteq \Irr(B)$ is good if it satisfies the following properties:
\begin{enumerate}[(i)]
\item Each $\chi\in X$ is a lift of a Brauer character and each $\psi\in\IBr(B)$ has exactly one lift in $X$.
\item If $p>2$, then for every $\chi_1,\chi_2\in X$, there exist $s\in \NN_0$ and $m_1,\dots,m_s\in\NN$ such that $$\Ext_{\cO G}^2(\chi_1,\chi_2)\simeq \bigoplus_{i=1}^s\cO/p^{m_i}\cO.$$
If $p=2$, then for every $\chi_1,\chi_2\in X$, there exist $s\in \NN_0$ and $m_1,\dots,m_s\in\NN_{>1}$ such that $$\Ext_{\cO G}^2(\chi_1,\chi_2)\simeq \bigoplus_{i=1}^s\cO/p^{m_i}\cO.$$
\end{enumerate}
\end{property}

\begin{prop}\label{prop:good}
A subset $X\subseteq \Irr(B)$ is good if and only if there exists some $\theta\in \Irr(D_2)$ such that
\begin{align*}
X=\{(1_{D_1},\chi)\otimes \theta|\chi\in \Irr(E\vert\varphi)\}=\Irr(B|1_{D_1}\otimes \theta).
\end{align*}
\end{prop}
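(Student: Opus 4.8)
The statement is an equivalence and I would prove the two implications separately. Throughout I would use the parametrisation \eqref{algn:chars_D_abelian} of $\Irr(B)$ and the identification of reduction modulo $p$ with restriction to $E$ (Remark~\ref{rem:brauer}), together with the observation that $1_{D_1}\otimes\theta$ is $E$-stable for every $\theta\in\Irr(D_2)$ — the trivial character of $D_1$ is fixed by every automorphism and $\theta$ is fixed because $E$ centralises $D_2$ — so that $\Irr(B\mid 1_{D_1}\otimes\theta)=\{(1_{D_1},\psi)\otimes\theta\mid\psi\in\Irr(E\mid\varphi)\}$, with no induction, and $\overline{(1_{D_1},\psi)\otimes\theta}=\psi$.

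For the ``if'' direction, set $X=\Irr(B\mid 1_{D_1}\otimes\theta)$. Property~\ref{property:good}(i) is immediate, since $\chi\mapsto\overline{\chi}$ gives the bijection $X\to\Irr(E\mid\varphi)=\IBr(B)$ just described. For Property~\ref{property:good}(ii) I would write $\chi_j=(1_{D_1},\psi_j)\otimes\theta$ and apply Lemma~\ref{lem:kuenn}(ii) with $\alpha=(1_{D_1},\psi_1)$, $\beta=(1_{D_1},\psi_2)$ and $\theta_{\chi_1}=\theta_{\chi_2}=\theta$. The characters $(1_{D_1},\psi_j)$ are inflated from $E$, so — $E$ being a $p'$-group — the Lyndon--Hochschild--Serre spectral sequence for $D_1\lhd D_1\rtimes E$ collapses to give
\[
\Ext^n_{\cO(D_1\rtimes E)}\big((1_{D_1},\psi_1),(1_{D_1},\psi_2)\big)\;\cong\;\big(H^n(D_1,\cO)\otimes_{\cO}\Hom_{\cO}(M_{\psi_1},M_{\psi_2})\big)^{E}.
\]
By Lemma~\ref{lem:ext_groups}(i),(iv) one has $H^0(D_1,\cO)=\cO$, $H^1(D_1,\cO)=\{0\}$ and $H^2(D_1,\cO)=\bigoplus_{i=1}^{u}\cO/p^{n_i}\cO$, so the groups for $n=0,1,2$ are $\cO^{\oplus\delta_{\psi_1,\psi_2}}$, $\{0\}$ and a direct sum of modules $\cO/p^{n_i}\cO$. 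Feeding these into Lemma~\ref{lem:kuenn}(ii) (the $\Ext^1$-term dies) shows that $\Ext^2_{\cO G}(\chi_1,\chi_2)$ is a direct sum of modules $\cO/p^{n_i}\cO$ with $1\le i\le t$, each $n_i>1$ when $p=2$ by Assumption~\ref{ass}(ii); so Property~\ref{property:good}(ii) holds and $X$ is good.

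For the ``only if'' direction, let $X$ be good. Since the $\overline{\chi}$ ($\chi\in X$) are pairwise distinct elements of $\IBr(B)$ (Property~\ref{property:good}(i)), Lemma~\ref{lem:Ext1_Ext2}(i) gives $k\otimes_{\cO}\Ext^2_{\cO G}(\chi_1,\chi_2)\cong\Ext^1_{kG}(\overline{\chi_1},\overline{\chi_2})$ for all $\chi_1,\chi_2\in X$, so by Lemma~\ref{lem:Ext1_Ext2}(ii) the graph on $X$ with an edge $\chi_1-\chi_2$ whenever $\Ext^2_{\cO G}(\chi_1,\chi_2)\ne\{0\}$ is connected. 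Along each edge $\Ext^2_{\cO G}(\chi_1,\chi_2)$ is a nonzero direct sum of $\cO/p^{m_i}\cO$'s (Property~\ref{property:good}(ii)), so Lemma~\ref{lem:Ext2_non-zero} forces the $D$-parts of $\chi_1$ and $\chi_2$ to be $E$-conjugate; by connectedness this holds for every pair, and after an overall conjugation I may assume each $\chi\in X$ equals $(\lambda,\psi_\chi)\uparrow_{D\rtimes E_\lambda}^{G}$ for one fixed $\lambda=\alpha\otimes\beta$ ($\alpha\in\Irr(D_1)$, $\beta\in\Irr(D_2)$, $\psi_\chi\in\Irr(E_\lambda\mid\varphi)$, $E_\lambda=E_\alpha$). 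The whole direction then comes down to proving $\alpha=1_{D_1}$: granting this, Lemma~\ref{lem:fff} gives $E_\lambda=E$, so each $\chi$ is $(1_{D_1},\psi_\chi)\otimes\beta$ with $\overline{\chi}=\psi_\chi$, and since $\chi\mapsto\overline{\chi}$ is a bijection $X\to\Irr(E\mid\varphi)$ the $\psi_\chi$ exhaust $\Irr(E\mid\varphi)$, whence $X=\Irr(B\mid 1_{D_1}\otimes\beta)$ as required.

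The main obstacle is this last claim, $\alpha=1_{D_1}$. I would argue by contradiction: if $\alpha\ne 1_{D_1}$ then $\alpha$ is not $E$-stable (Lemma~\ref{lem:fff}), so $E_\lambda\subsetneq E$ and the $E$-orbit of $\lambda$ has length $\ge 2$; moreover $D$ is non-cyclic, for if $D$ were cyclic then $D=D_1$, $E/Z=E/C_E(D)$ embeds in $\Aut(D)$ whose Hall $p'$-subgroup is cyclic, hence $E$ (central cyclic-by-cyclic) is abelian, every $\overline{\chi}=\psi_\chi\uparrow_{E_\lambda}^{E}$ is linear, $[E:E_\lambda]=1$, and $\alpha=1_{D_1}$ after all. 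So $t\ge 2$. Picking an edge $\chi_1-\chi_2$ and iterating Shapiro's Lemma (Lemma~\ref{lem:ext_groups}(ii)) with the Mackey formula, as in the proof of Lemma~\ref{lem:Ext2_non-zero}, and then passing from $D\rtimes(E_\lambda\cap{}^gE_\lambda)$ to $D$ (possible as $E_\lambda\cap{}^gE_\lambda$ is a $p'$-group), one gets
\[
\Ext^2_{\cO G}(\chi_1,\chi_2)\;\cong\;\bigoplus_{g}H^2\!\big(D,\;M_{(\lambda,\psi_{\chi_1})}^{*}\otimes_{\cO}{}^{g}M_{(\lambda,\psi_{\chi_2})}\big)^{E_\lambda\cap{}^{g}E_\lambda},
\]
$g$ running over $E_\lambda\backslash E/E_\lambda$. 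On $D$ the coefficient module is $\cO_{\xi_g}$-isotypic with $\xi_g={}^g\lambda\cdot\lambda^{-1}$; for $g\in E_\lambda$ it is trivial and the summand is a sum of $\cO/p^{n_i}\cO$'s, but for $g\notin E_\lambda$ one has $\xi_g\ne 1$ and, by Lemma~\ref{lem:ext_groups}(iv) together with $t\ge 2$, $H^2(D,\cO_{\xi_g})\cong[\cO/(1-\zeta_{\xi_g})\cO]^{\oplus(t-1)}\ne\{0\}$ — a sum of modules $\cO/(1-\zeta)\cO$ with $\zeta$ a non-trivial $p$-power root of unity, none of which is isomorphic to any $\cO/p^m\cO$ allowed in Property~\ref{property:good}(ii) (this is exactly the point of the computation preceding Lemma~\ref{lem:fff}). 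So it suffices to exhibit, for some edge and some $g\notin E_\lambda$, a nonzero contribution — and this is the technical heart: one must choose the edge and then compare, as $k[E_\lambda\cap{}^gE_\lambda]$-modules, $H^2(D,\cO_{\xi_g})$ (with its $\Aut(D)$-action) and $\Hom_{\cO}(M_{\psi_{\chi_1}},{}^gM_{\psi_{\chi_2}})$, showing they share a composition factor so that the fixed points do not vanish. I expect this step — controlling the $p'$-group invariants — to be where essentially all the work is.
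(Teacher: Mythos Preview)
Your ``if'' direction is fine; the Lyndon--Hochschild--Serre argument is a clean alternative to the paper's more direct computation, which simply observes that every $M_\chi$ with $\chi\in X$ is a summand of $\cO_{1_{D_1}\otimes\theta}\uparrow_D^G$ and computes $\Ext^2_{\cO G}$ of that induced module against itself via Lemma~\ref{lem:ext_groups}(ii),(iv).

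Your ``only if'' direction starts correctly and isolates the right claim, but the strategy you propose for proving $E_\lambda=E$ has a genuine gap, and it is not merely a technicality. You want to contradict Property~\ref{property:good}(ii) by exhibiting, for some edge $\chi_1-\chi_2$ and some $g\notin E_\lambda$, a nonzero off-diagonal Mackey summand in $\Ext^2_{\cO G}(\chi_1,\chi_2)$; this would indeed produce a forbidden $\cO/(1-\zeta)\cO$ term. But nothing in the hypotheses gives you control over the $k[E_\lambda\cap{}^gE_\lambda]$-fixed points of $H^2(D,\cO_{\xi_g})\otimes\Hom_{\cO}(M_{\psi_{\chi_1}},{}^gM_{\psi_{\chi_2}})$, and the step you flag as ``the technical heart'' is precisely the step the paper avoids entirely.

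The paper runs the logic in the opposite direction. It \emph{uses} Property~\ref{property:good}(ii) to force every off-diagonal Mackey term to vanish: each is a direct summand of the full $\Ext^2_{\cO G}(\chi_1,\chi_2)$, hence itself a sum of $\cO/p^{m_i}\cO$'s, but it is also a sum of $\Ext^2$'s in the block $\cO G_\lambda e_\varphi$ between characters whose $D$-parts are not $E_\lambda$-conjugate, so Lemma~\ref{lem:Ext2_non-zero} (applied to $G_\lambda$) rules out any nonzero such sum. A short inner-product computation then verifies the disjointness hypothesis of Lemma~\ref{lem:Ext1_Ext2}(i), so these vanishings pass to $\Ext^1_{kG_\lambda}$, yielding $\Ext^1_{kG_\lambda}(\chi_\mu,\eta)=0$ for every $\eta\in\Irr(E_\lambda\mid\varphi)$ outside $\{\chi_\psi:\psi\in\IBr(B)\}$. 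Connectivity of the $\Ext^1$-graph of the \emph{smaller} block $\cO G_\lambda e_\varphi$ (Lemma~\ref{lem:Ext1_Ext2}(ii)) now forces $\{\chi_\psi\}=\Irr(E_\lambda\mid\varphi)$, so that induction $\Irr(E_\lambda\mid\varphi)\to\Irr(E\mid\varphi)$ is a bijection; a dimension count on $KEe_\varphi$ then gives $[E:E_\lambda]=1$, and Lemma~\ref{lem:fff} finishes. The key manoeuvre you are missing is this bootstrap: rather than hunting for a nonzero bad summand, use goodness to kill all off-diagonal terms, read that vanishing through $\Ext^1_{kG_\lambda}$, and let connectivity of the smaller block do the work.
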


\begin{proof}
As with Lemma~\ref{lem:Ext2_non-zero}, we only prove the $p>2$ case. The proposition is proved for $p=2$ in an identical way, once we have taken into account Assumption~\ref{ass} and the differences in Lemma~\ref{lem:Ext2_non-zero} and Property~\ref{property:good}, in the $p=2$ case.
\newline
\newline
We first assume $X$ is of the desired form and let $\theta$ be the relevant irreducible character of $D_2$. By Remark \ref{rem:brauer}, $X$ satisfies Property~\ref{property:good}(i). Now, by Lemma \ref{lem:ext_groups}(ii) and (iv),
\begin{align*}
&\Ext_{\cO G}^2((1_{D_1}\otimes\theta)\uparrow_D^G,(1_{D_1}\otimes\theta)\uparrow_D^G) \simeq \Ext_{\cO D}^2((1_{D_1}\otimes\theta),(1_{D_1}\otimes\theta)\uparrow_D^G\downarrow^G_D)\\
\simeq& \Ext_{\cO D}^2(1_{D_1}\otimes\theta,1_{D_1}\otimes\theta)^{\oplus [G:D]}\simeq \bigoplus_{i=1}^t(\cO/p^{n_i}\cO)^{\oplus [G:D]}.
\end{align*}
Since every $\chi\in X$ is an irreducible constituent of $(1_{D_1}\otimes\theta)\uparrow_D^G$, $X$ satisfies Property~\ref{property:good}(ii) and $X$ is good.
\newline
\newline
For the remainder of the proof we identify $\IBr(B)$ with $\Irr(E|\varphi)$ (and do other analogous identifications), using Remark~\ref{rem:brauer}.
\newline
\newline
For the converse, suppose $X$ is good. For any $\mu\in\IBr(B)=\Irr(E|\varphi)$, let $(\lambda_\mu,\chi_\mu)\uparrow_{G_{\lambda_\mu}}^G\in X$ be the unique lift in $X$ of $\mu$, for some $\lambda_\mu \in\Irr(D)$ and $\chi_\mu\in\Irr(E_{\lambda_\mu}\vert\varphi)$. In particular,
\begin{align}\label{algn:ind_bij}
\mu = (\lambda_\mu,\chi_\mu)\uparrow_{D\rtimes E_{\lambda_\mu}}^G\downarrow^G_E=\chi_\mu\uparrow_{E_{\lambda_\mu}}^E,
\end{align}
for each $\mu\in\Irr(E|\varphi)$. Next, let $\mu\neq\psi\in\Irr(E|\varphi)$ satisfy $\Ext_{kG}^1(\mu,\psi)\neq \{0\}$. Since $X$ satisfies Property~\ref{property:good}(ii), 
\begin{align*}
\Ext_{\cO G}^2((\lambda_\mu,\chi_\mu)\uparrow_{G_{\lambda_\mu}}^G,(\lambda_\psi,\chi_\psi)\uparrow_{G_{\lambda_\psi}}^G)\simeq \bigoplus_{i=1}^s\cO/p^{m_i}\cO,
\end{align*}
for some $s\in\NN$ and $m_1,\dots,m_s\in\NN$. (Lemma~\ref{lem:Ext1_Ext2}(i) ensures that $s>0$.) Therefore, by Lemma~\ref{lem:Ext2_non-zero}, $\lambda_\mu \sim_E \lambda_\psi$. Now, by Lemma~\ref{lem:Ext1_Ext2}(ii) and the fact that $X$ satisfies Property~\ref{property:good}(i), we can say the same thing for any pair of characters in $X$. In other words, we may assume that all the $\lambda_\mu$'s are equal to some fixed $\lambda\in\Irr(D)$. Now, for not necessarily distinct $\mu,\psi\in\Irr(E|\varphi)$,
\begin{align*}
&\Ext_{\cO G}^2((\lambda,\chi_\mu) \uparrow_{G_\lambda}^G,(\lambda,\chi_\psi) \uparrow_{G_\lambda}^G) \simeq \Ext_{\cO G_\lambda}^2((\lambda,\chi_\mu),(\lambda,\chi_\psi) \uparrow_{G_\lambda}^G \downarrow_{G_\lambda}^G)\\
\simeq & \Ext_{\cO G_\lambda}^2\left((\lambda,\chi_\mu),\sum_{g\in E_\lambda \backslash E/E_\lambda}{}^g(\lambda,\chi_\psi) \downarrow^{{}^g G_\lambda}_{{}^g G_\lambda\cap G_\lambda}\uparrow_{{}^g G_\lambda\cap G_\lambda}^{G_\lambda}\right).
\end{align*}
Of course, the only $\lambda'\in\Irr(D)$ $E_\lambda$-conjugate to $\lambda$ is $\lambda$ itself. Therefore, the only irreducible constituent of $(\lambda,\chi_\psi) \uparrow_{G_\lambda}^G \downarrow_{G_\lambda}^G$ of the form $(\lambda',\chi')\uparrow_{G_{\lambda'}}^G$, for some $\lambda'\in\Irr(D)$ and $\chi'\in\Irr(E_{\lambda'}\vert\varphi)$, with $\lambda'$ $E_\lambda$-conjugate to $\lambda$ is $(\lambda,\chi_\psi)$, with multiplicity one. Therefore, since $X$ satisfies Property~\ref{property:good}(ii), Lemma~\ref{lem:Ext2_non-zero} applied to $\cO G_\lambda e_\varphi$ gives that
\begin{align*}
\Ext_{\cO G_\lambda}^2((\lambda,\chi_\mu) \uparrow_{G_\lambda}^G,(\lambda,\chi_\psi) \uparrow_{G_\lambda}^G) \simeq \Ext_{\cO G_\lambda}^2((\lambda,\chi_\mu),(\lambda,\chi_\psi))
\end{align*}
and
\begin{align}\label{algn:zero_Ext2}
\Ext_{\cO G_\lambda}^2((\lambda,\chi_\mu),{}^g(\lambda,\chi_\psi)\downarrow^{{}^g G_\lambda}_{{}^g G_\lambda\cap G_\lambda}\uparrow_{{}^g G_\lambda\cap G_\lambda}^{G_\lambda})=\{0\},
\end{align}
for any $g\in E\setminus E_\lambda$.
\newline
\newline
Now, since $X$ satisfies Property~\ref{property:good}(ii), (\ref{algn:ind_bij}) gives that
\begin{align*}
\delta_{\mu,\psi}&=\langle \mu,\psi\rangle_E=\langle\chi_\mu\uparrow_{E_\lambda}^E,\chi_\psi\uparrow_{E_\lambda}^E\rangle_E=\langle\chi_\mu,\chi_\psi\uparrow_{E_\lambda}^E\downarrow^E_{E_\lambda}\rangle_{E_\lambda}\\
&=\left\langle\chi_\mu,\sum_{g\in E_\lambda \backslash E/E_\lambda}{}^g\chi_\psi\downarrow^{{}^g E_\lambda}_{{}^g E_\lambda\cap E_\lambda}\uparrow_{{}^g E_\lambda\cap E_\lambda}^{E_\lambda}\right\rangle_{E_\lambda},
\end{align*}
for all $\mu,\psi\in\Irr(E|\varphi)$. In other words, for any $g\in E$,
\begin{align*}
\langle\chi_\mu,{}^g\chi_\psi\downarrow^{{}^g E_\lambda}_{{}^g E_\lambda\cap E_\lambda}\uparrow_{{}^g E_\lambda\cap E_\lambda}^{E_\lambda}\rangle_{E_\lambda}=
\begin{cases}
1 & \text{if }\mu=\psi\text{ and }g\in E_\lambda,\\
0 & \text{otherwise.}
\end{cases}
\end{align*}
Therefore, since
\begin{align*}
{}^g(\lambda,\chi_\psi)\downarrow^{{}^g G_\lambda}_{{}^g G_\lambda\cap G_\lambda}\uparrow_{{}^g G_\lambda\cap G_\lambda}^{G_\lambda}\downarrow^{G_\lambda}_{E_\lambda}={}^g\chi_\psi\downarrow^{{}^g E_\lambda}_{{}^g E_\lambda\cap E_\lambda}\uparrow_{{}^g E_\lambda\cap E_\lambda}^{E_\lambda},
\end{align*}
we can apply Lemma~\ref{lem:Ext1_Ext2}(i) to (\ref{algn:zero_Ext2}) to give
\begin{align}\label{algn:zero_Ext1}
\Ext_{kG_\lambda}^1(\chi_\mu,{}^g\chi_\psi\downarrow^{{}^g E_\lambda}_{{}^g E_\lambda\cap E_\lambda}\uparrow_{{}^g E_\lambda\cap E_\lambda}^{E_\lambda})=\{0\},
\end{align}
for any $\mu,\psi\in\Irr(E|\varphi)$ and $g\in E\setminus E_\lambda$. Certainly every $\eta\in\Irr(E_\lambda|\varphi)$ is an irreducible constituent of
\begin{align*}
\psi\downarrow^E_{E_\lambda}=\chi_\psi\uparrow_{E_\lambda}^E\downarrow^E_{E_\lambda}=\sum_{g\in E_\lambda\backslash E/E_\lambda}{}^g \chi_\psi\downarrow^{{}^gE_\lambda}_{{}^g E_\lambda\cap E_\lambda}\uparrow_{{}^g E_\lambda\cap E_\lambda}^{E_\lambda},
\end{align*}
for some $\psi\in\Irr(E|\varphi)$, where the first equality follows from (\ref{algn:ind_bij}). Therefore, (\ref{algn:zero_Ext1}) implies that $\Ext_{kG_\lambda}^1(\chi_\mu,\eta)\neq\{0\}$, for some $\eta\in\Irr(E_\lambda|\varphi)$, only if $\eta=\chi_\psi$, for some $\psi\in\Irr(E|\varphi)$. Lemma~\ref{lem:Ext1_Ext2}(ii) applied to the block $\cO (D\rtimes E_\lambda)e_\varphi$ now gives that
\begin{align*}
\Irr(E_\lambda\vert\varphi) = \{\chi_\mu|\mu\in\Irr(E|\varphi)\}.
\end{align*}
So, again by (\ref{algn:ind_bij}), we have a one-to-one bijection between $\Irr(E_\lambda\vert\varphi)$ and $\Irr(E\vert\varphi)$ given by induction. This implies $E_\lambda=E$, since
\begin{align*}
&[E:E_\lambda].\dim_K(KE_\lambda e_\varphi)=\dim_K(KEe_\varphi)=\sum_{\chi\in\Irr(E|\varphi)}\chi(1)^2\\
=&\sum_{\eta\in\Irr(E_\lambda|\varphi)}\eta\uparrow_{E_\lambda}^E(1)^2=[ E:E_\lambda]^2.\sum_{\eta\in\Irr(E_\lambda|\varphi)}\eta(1)^2=[E:E_\lambda]^2.\dim_K(KE_\lambda e_\varphi).
\end{align*}
Therefore, by Lemma~\ref{lem:fff}, $\lambda = 1_{D_1}\otimes \theta$, for some $\theta\in\Irr(D_2)$, as required.
\end{proof}

\section{Weiss' criterion and the main theorems}\label{sec:main}

Before proceeding we need to set up some notation. Let $b$ be a block of $\cO H$, for some finite group $H$ and $Q$ a normal $p$-subgroup of $H$. We denote by $b^Q$ the direct sum of blocks of $\cO(H/Q)$ dominated by $b$, that is those blocks not annihilated by the image of $e_b$ under the natural $\cO$-algebra homomorphism $\cO H\to \cO(H/Q)$. It will also be necessary to define $\cT(b)$ for $b$ a sum of blocks of $H$. Note our definition of $\Pic(b)$ already makes sense for $b$ a sum of blocks.
\begin{align*}
\cT(b)=&\left\lbrace [M]_{\sim}\in\Pic(b)\vert M\text{ is a direct sum of modules with trivial source}\right.\\
&\left.\text{as }\cO (H\times H)\text{-modules}\right\rbrace.
\end{align*}
The following Proposition was shown to be a consequence of Weiss' criterion in~\cite[Propositions 4.3,4.4]{eali19}. Weiss' criterion is a statement about permutation modules originally stated in~\cite[Theorem 2]{we88} but proved in its most general form in~\cite[Theorem 1.2]{mcsyza18}. It has, in recently years, been a very important tool for calculating Picard groups.

\begin{prop}\label{prop:indMor}$ $
\begin{enumerate}[(i)]
\item The inflation map $\Inf_{H/Q}^H:\Irr(H/Q)\to\Irr(H)$ induces a bijection between $\Irr(b^Q)$ and $\Irr(b\vert 1_Q)$.
\item Suppose $M$ is a $b$-$b$-bimodule inducing a Morita auto-equivalence of $b$ that permutes the elements of $\Irr(b\vert 1_Q)$.
\begin{enumerate}
\item Then ${}^QM$, the set of fixed points of $M$ under the left action of $Q$, induces a Morita auto-equivalence of $b^Q$. Furthermore, the permutation of $\Irr(b^Q)$ induced by ${}^QM$ is identical to the permutation that $M$ induces on $\Irr(b\vert 1_Q)$, once these two sets have been identified using part (i).
\item If ${}^QM\in\cT(b^Q)$, then $M\in\cT(b)$.
\end{enumerate}
\end{enumerate}
\end{prop}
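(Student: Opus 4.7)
The plan is to tackle the three claims in sequence, using classical block theory for a normal $p$-subgroup, a fixed-point construction on $M$, and finally Weiss' criterion. For part (i), the canonical projection $\pi\colon \cO H\to \cO(H/Q)$ sends $e_b$ to $e_{b^Q}$ by the very definition of domination, so inflation carries $\Irr(b^Q)$ bijectively onto $\{\chi\in\Irr(b)\mid Q\subseteq\ker\chi\}$. Since $Q$ is a normal $p$-subgroup of $H$, Clifford theory gives that $\chi\downarrow_Q$ is a sum of $H$-conjugates of a single irreducible character of $Q$, and hence $\chi\in\Irr(H\vert 1_Q)$ if and only if $Q\subseteq\ker\chi$. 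Combining these observations yields the required bijection.

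For part (ii)(a), I would view $M$ as an $\cO(H\times H)$-module in the block $b\otimes b^{\op}$. Invertibility of $M$, together with the bijection $\sigma\colon\Irr(b)\to\Irr(b)$ induced by $M\otimes_b-$, gives the character $\sum_{\chi\in\Irr(b)}\sigma(\chi)\otimes\chi^\ast$ for $K\otimes_\cO M$ as an $H\times H$-module. The hypothesis that $\sigma$ preserves $\Irr(b\vert 1_Q)$, combined with part (i), shows that the $\Irr(b\vert 1_Q)$-contribution to this character has $Q\times Q$ in its kernel, while the orthogonal part has no $Q\times Q$-invariants. Hence the left $Q$-invariants ${}^QM$ coincide with the right $Q$-invariants, making ${}^QM$ a $b^Q$-$b^Q$-sub-bimodule of $M$. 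The Morita auto-equivalence induced by ${}^QM$ is then identified with the restriction of $M\otimes_b-$ to the subcategory of $b$-modules with trivial $Q$-action, which by (i) is exactly the piece controlled by $\Irr(b\vert 1_Q)$; tracing the inflation bijection identifies the two induced permutations.

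For part (ii)(b), I would apply Weiss' criterion, in the form of \cite[Theorem 1.2]{mcsyza18}, to $M$ viewed as an $\cO(H\times H)$-lattice with respect to the normal $p$-subgroup $Q\times 1$. The criterion gives a trivial-source certificate provided the $(Q\times 1)$-fixed-point module has trivial source, which after passage to $\cO((H/Q)\times H)$ and idempotent truncation reduces to ${}^QM\in\cT(b^Q)$, and provided a certain $\cO$-freeness condition on an auxiliary module holds, which is automatic for an invertible bimodule. An analogous argument on the right upgrades this to the full $\cO((H/Q)\times(H/Q))$-statement.

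The main obstacle, and the technical core of \cite[Propositions 4.3, 4.4]{eali19}, is in (ii)(a): turning the purely character-theoretic hypothesis on $\sigma$ into a genuine integral bimodule statement, so that ${}^QM$ really is a $b^Q$-$b^Q$-bimodule inducing a Morita auto-equivalence and not merely a statement about the rational span $K\otimes_\cO M$. Once this integral descent is secured, the application of Weiss' criterion in (ii)(b) is largely formal.
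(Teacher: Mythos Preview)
The paper does not give its own proof of this proposition; it simply records it as a consequence of Weiss' criterion, citing \cite[Propositions~4.3,~4.4]{eali19}. Your outline is a faithful reconstruction of that argument: part (i) is elementary Clifford theory, part (ii)(a) is the integral descent from the character-level statement to an honest $b^Q$-$b^Q$-bimodule (which, as you correctly flag, is where the work lies), and part (ii)(b) is the application of Weiss' criterion.

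Two small clarifications on (ii)(b). First, the form of Weiss' criterion needed here (as in \cite{mcsyza18} or \cite[Proposition~4.3]{eali19}) carries no auxiliary $\cO$-freeness hypothesis beyond $M$ being a lattice; once ${}^{Q\times 1}M$ is a $p$-permutation $\cO((H/Q)\times H)$-module, $M$ is automatically a $p$-permutation $\cO(H\times H)$-module. Second, your final sentence has the direction slightly inverted: the hypothesis gives trivial source over $(H/Q)\times(H/Q)$, and one must pass \emph{up} to $(H/Q)\times H$ before invoking Weiss with respect to $Q\times 1$. This step is immediate once (ii)(a) is in place, since the left and right $Q$-fixed points of $M$ coincide, so ${}^QM$ regarded as an $\cO((H/Q)\times H)$-module is just the inflation along $1\times Q$ of the given $\cO((H/Q)\times(H/Q))$-module, and inflation preserves the trivial-source property.
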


%We now want to reduce Theorems \ref{thm:main} and \ref{thm:main2} to the case where we have an abelian defect group. We first need to set up some notation that holds in the situation when $D$ is abelian.

We immediately drop all the assumptions on $b$ made just before Proposition~\ref{prop:indMor}. We are now ready to prove our main theorem.

\begin{thm}\label{thm:main}
Let $p>2$ and $b$ a block with normal defect group. Then $\Pic(b)=\cL(b)$.
\end{thm}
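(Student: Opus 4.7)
The strategy is to apply Weiss' criterion via Proposition~\ref{prop:indMor}, combined with the good-subset classification of $\S$\ref{sec:ext}, after reducing to a semidirect-product block and passing to its abelianised defect group.

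First, I would reduce to the setting of Assumption~\ref{ass}(i). By the K\"ulshammer--Puig description of source algebras of blocks with normal defect group, $b$ is source-algebra equivalent to some $\cO(D\rtimes E)e_\varphi$; since source-algebra equivalences preserve both $\Pic$ and the linear-source condition, I may assume $b=\cO G e_\varphi$ with $G=D\rtimes E$ from the outset. To invoke Proposition~\ref{prop:good}, which requires abelian defect, I would then pass to $\bar G:=G/D'$ for $D':=[D,D]$, obtaining the block $\bar b:=b^{D'}$ with abelian defect group $\bar D:=D/D'$ satisfying Assumption~\ref{ass}(ii) for $p>2$. Write $\bar D=\bar{D_1}\times \bar{D_2}$ with $\bar{D_1}=[\bar D,E]$ and $\bar{D_2}=C_{\bar D}(E)$, and let $D_1\leq D$ denote the preimage of $\bar{D_1}$; then $D_1=[D,G]$ is normal in $G$ and $G/D_1\simeq \bar{D_2}\times E$, since $E$ acts trivially on $\bar{D_2}$.

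Now take $M\in\Pic(b)$. The crucial step is to show that the permutation of $\Irr(b)$ induced by $M$ restricts to a permutation of $\Irr(b|1_{D_1})$. Via inflation (Proposition~\ref{prop:indMor}(i)) this set is identified with $\Irr(\bar b\,|\,1_{\bar{D_1}})$, which by Proposition~\ref{prop:good} is exactly the disjoint union of the good subsets of $\Irr(\bar b)$. Since Property~\ref{property:good} is phrased solely in terms of Brauer character lifts and $\Ext^1,\Ext^2$ groups, both Morita invariants, any Morita auto-equivalence of $\bar b$ permutes good subsets. The main technical obstacle here is that goodness is formulated via Ext over $\cO\bar G$ whereas $M$ acts on $b$-modules; bridging this gap should follow from a Lyndon--Hochschild--Serre spectral sequence argument for the normal $p$-subgroup $D'$, identifying $\Ext^{\leq 2}_{\cO G}(\chi,\eta)$ with $\Ext^{\leq 2}_{\cO\bar G}(\bar\chi,\bar\eta)$ for $\chi,\eta\in\Irr(b|1_{D'})$.

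Once the permutation of $\Irr(b|1_{D_1})$ is in hand, Proposition~\ref{prop:indMor}(ii) applied with $Q=D_1$ produces a Morita auto-equivalence ${}^{D_1}M$ of $b^{D_1}$, which is Morita equivalent to $\cO\bar{D_2}$. For this nilpotent block the Picard group is completely understood: every class is a trivial-source bimodule twisted by a linear character of $\bar{D_2}$. Since $G^{ab}\simeq \bar{D_2}\times E^{ab}$, every such linear character lifts to a linear character of $G$ and hence to a linear-source $b$-bimodule $L$. Replacing $M$ by $M\otimes_b L^{-1}$ thus puts its descent into $\cT(b^{D_1})$; Proposition~\ref{prop:indMor}(ii)(b) then gives $M\otimes_b L^{-1}\in\cT(b)$, so $M\in\cT(b)\cdot\cL(b)\subseteq\cL(b)$. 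Undoing the initial source-algebra reduction yields $\Pic(b)=\cL(b)$.
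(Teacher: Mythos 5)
Your reduction to $\cO(D\rtimes E)e_\varphi$ and the overall shape of the argument are the same as the paper's, and passing to $\bar G=G/D'$ to invoke Proposition~\ref{prop:good} is exactly right. But there is a real gap where you identify it yourself, and the fix you propose would not close it.

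The issue is the very first permutation statement: before you can form ${}^{D'}M$ (and hence talk about a Morita auto-equivalence of $\bar b$ to which Morita-invariance of goodness would apply), Proposition~\ref{prop:indMor}(ii) requires you to already know that $M$ permutes $\Irr(b\,|\,1_{D'})$. You try to sidestep this by matching $\Ext^{\le 2}_{\cO G}$ against $\Ext^{\le 2}_{\cO\bar G}$ using a Lyndon--Hochschild--Serre spectral sequence for $D'\lhd G$. That does not work: for a non-trivial normal $p$-subgroup $D'$ acting trivially on the coefficients, the terms $E_2^{p,q}=H^p(\bar G,H^q(D',-))$ with $q\ge 1$ are generically non-zero, so the inflation map $H^n(\bar G,-)\to H^n(G,-)$ is not an isomorphism for $n=1,2$, and no clean identification of $\Ext^{\le 2}$ groups drops out. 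More to the point, you do not need any such identification. The paper's route is much more elementary: it shows that $\Irr(b\,|\,1_{D'})$ is precisely the set of height-zero characters of $b$ (one direction via $p'$-degree of characters of $\bar b$ as in (\ref{algn:chars_D_abelian}), the other because a character lying over a non-linear $\lambda\in\Irr(D)$ has degree divisible by $\lambda(1)$), and then cites \cite[Lemme~1.6]{br90}, which says any Morita equivalence preserves heights. This gives the permutation of $\Irr(b\,|\,1_{D'})$ with no Ext computation, after which Proposition~\ref{prop:indMor}(ii)(a) produces ${}^{D'}M\in\Pic(\bar b)$ and the Morita-invariance of Property~\ref{property:good} (which is where Lemma~\ref{lem:char_realise}(ii) is needed) is applied entirely inside $\Pic(\bar b)$.

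Your endgame, applying Proposition~\ref{prop:indMor} with $Q=D_1$ (the preimage of $\bar D_1$) and appealing to Picard groups of nilpotent blocks, is a genuine variant of the paper's, but you should be a little careful: $b^{D_1}$ is a \emph{sum} of nilpotent blocks of $\cO(\bar D_2\times E)$, one for each member of $\Irr(E\,|\,\varphi)$, and the auto-equivalence may permute these summands, so ``the Picard group of a nilpotent block'' does not quite apply verbatim. The paper avoids this by first twisting $M$ by the linear character $\omega^{-1}$ (the inflation of $\theta$), after which $\Irr(b\,|\,1_D)$ is stabilised, and then applying Proposition~\ref{prop:indMor} with $Q=D$: since $G/D$ is a $p'$-group, $\cT(b^D)=\Pic(b^D)$ trivially and part (ii)(b) immediately gives $M_{\omega^{-1}}\otimes_b M\in\cT(b)\subseteq\cL(b)$. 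Your approach could be made to work, but the twist-then-take-$Q=D$ route is cleaner and avoids the sum-of-blocks subtlety.
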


\begin{proof}
We first reduce to the situation that $b=B$ as in Assumption \ref{ass}(i), where the defect group of $b$ is isomorphic to $D$ and its inertial quotient is isomorphic to $L$. Indeed it follows from \cite[Theorem A]{ku85} that such a $B$ can be chosen to be Morita equivalent to $b$ and even source algebra equivalent by \cite[Theorem 6.14.1]{linckelmann18}. Note that, by \cite[Theorem 6.7.6(v)]{linckelmann18}, $L$ must be prime to $p$ and $Z$ must be as well, since otherwise $B$ has defect group larger than that of $b$. In other words $E$ is indeed a $p'$-group as assumed in Assumption \ref{ass}(i).
\newline
\newline
By~\cite[Lemma 2.8]{bokeli20}, the above equivalence yields isomorphisms $\Pic(b)\simeq \Pic(B)$ and $\cL(b)\simeq \cL(B)$. Therefore, since we are concerned only with $\Pic(b)$ and $\cL(b)$, we may assume that $b=B$.
\newline
\newline
Let $M\in\Pic(B)$. We first show that the elements of $\Irr(B|1_{[D,D]})$ get permuted by the permutation of $\Irr(B)$ induced by $M$. This claim will follow from~\cite[Lemme 1.6]{br90} once we have shown that $\Irr(B|1_{[D,D]})$ is precisely the subset of irreducible characters of $B$ of height zero, i.e. $\Irr(B|1_{[D,D]})$ is precisely the subset of irreducible characters of $B$ of $p'$-degree.
\newline
\newline
If $\eta\in\Irr(B|1_{[D,D]})$ then we can view $\eta\in \Irr(B^{[D,D]})$ via Proposition \ref{prop:indMor}(i) and hence as the character of a block with normal abelian defect group. After possibly applying the reduction from the beginning of the proof and~\cite[Lemme 1.6]{br90} again, it follows from the description around (\ref{algn:chars_D_abelian}) that $\eta$ has $p'$-degree.
\newline
\newline
Conversely if $\eta\in\Irr(B|\lambda)$, for some non-linear $\lambda\in\Irr(D)$, then $\eta\downarrow^G_D$ is a sum of conjugates of $\lambda$. In particular, $\lambda(1)\mid \eta(1)$ and $\eta$ does not have $p'$-degree.
\newline
\newline
We can now apply Proposition~\ref{prop:indMor}(ii)(a) with respect to $M$ and $[D,D]$ to obtain ${}^{[D,D]}M\in\Pic(B^{[D,D]})$. In this case~\cite[Corollary 4]{ku87} tells us that $B^{[D,D]}$ is just a single block of $\cO(G/[D,D])$. Actually, $B^{[D,D]}$ is of the form described in Assumption \ref{ass}(ii). Indeed, if $h\in E$ acts trivially on $D/[D,D]$, then $h$ must also act trivially on each factor group $\gamma_i(D)/\gamma_{i+1}(D)$ in the lower central series of $D$, so, by \cite[Corollary 5.3.3]{gor80}, the subgroup generated by $h$ in $\Aut(D)$ is a $p$-group, and, since $h$ has $p'$-order, $h\in Z$. In conclusion we have $C_{E}(D/[D,D])=Z$ and we adopt the slight abuse of notation $D/[D,D]=D_1\times D_2$ from $\S$\ref{sec:ext}.
\newline
\newline
Note that by Lemma~\ref{lem:char_realise}(ii), the notion of being good from Property~\ref{property:good} is a Morita invariant. Therefore, since $B^{[D,D]}$ is now of the desired form, we can apply Proposition~\ref{prop:good} to obtain that there exists some $\theta\in \Irr(D_2)$ such that $\Irr(B^{[D,D]}|1_{D/[D,D]})$ gets sent to $\Irr(B^{[D,D]}|1_{D_1}\otimes \theta)$ under the permutation of $\Irr(B^{[D,D]})$ induced by ${}^{[D,D]}M$. The last sentence in Proposition~\ref{prop:indMor}(ii)(a) now implies that $\Irr(B|1_D)$ gets sent to $\Irr(B|\Inf_{D/[D,D]}^D(1_{D_1}\otimes \theta))$ under the permutation of $\Irr(B)$ induced by $M$.
\newline
\newline
Set $\mathbf{D}_1$ to be the preimage of $D_1$ in $D$. Since $D_2\simeq G/(\mathbf{D}_1\rtimes E)$, we can set $\omega\in\Irr(G)$ to be the inflation of $\theta$ to $G$ and $M_{\omega^{-1}}\in \Pic(B)$ to be the $B$-$B$-bimodule inducing the Morita auto-equivalence given by tensoring with $\omega^{-1}$. Now $\Irr(B|1_D)$ gets permuted under the permutation of $\Irr(B)$ induced by $M_{\omega^{-1}}\otimes_B M$. We can therefore apply Proposition~\ref{prop:indMor}(ii) with respect to $M_{\omega^{-1}}\otimes_B M$ and $D$ to obtain that $M_{\omega^{-1}}\otimes_B M\in\cT(B)$. ($G/D$ is a $p'$-group and so certainly ${}^D(M_{\omega^{-1}}\otimes_B M)\in\cT(B^D)$.) Since $\cT(B)$ is a subgroup of $\cL(B)$, it remains only to show that $M_{\omega^{-1}}\in\cL(B)$. However,
\begin{align*}
M_{\omega^{-1}}=\cO_{\Delta\omega^{-1}}\uparrow_{\Delta G}^{G\times G},
\end{align*}
where $\cO_{\Delta\omega^{-1}}$ is the $\cO(\Delta G)$-module $\cO$ with the action of $\Delta G$ given by $(g,g).m=\omega(g)^{-1}m$, for all $m\in \cO$, so $M_{\omega^{-1}}$ certainly has linear source.
\end{proof}

\begin{thm}\label{thm:main2}
Let $p=2$ and $b$ a block with normal defect group $D$ such that $D/[D,D]$ has no direct factor isomorphic to $C_2$. Then $\Pic(b)=\cL(b)$.
\end{thm}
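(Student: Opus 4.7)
The plan is to mimic the proof of Theorem~\ref{thm:main} almost line by line, with the hypothesis that $D/[D,D]$ has no direct factor isomorphic to $C_2$ playing the role that Assumption~\ref{ass}(ii) plays in characteristic $2$. First, I would perform the same reduction to the normal form $b=B$ of Assumption~\ref{ass}(i), invoking \cite[Theorem A]{ku85} together with \cite[Theorem 6.14.1]{linckelmann18} for a source algebra equivalent block, and \cite[Lemma 2.8]{bokeli20} for the invariance of $\Pic$ and $\cL$ under such equivalences. The argument that $E$ is a $p'$-group transfers unchanged.

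Next, for any $M\in\Pic(B)$, I would show the permutation of $\Irr(B)$ induced by $M$ preserves $\Irr(B|1_{[D,D]})$. As in the proof of Theorem~\ref{thm:main}, this set coincides with the set of height zero (equivalently $p'$-degree) irreducible characters of $B$: the containment $\Irr(B|1_{[D,D]})\subseteq\Irr(B)_{p'}$ follows from identification with $\Irr(B^{[D,D]})$ via Proposition~\ref{prop:indMor}(i) and the character description (\ref{algn:chars_D_abelian}) applied to the normal abelian defect quotient block, while the converse uses the fact that for non-linear $\lambda\in\Irr(D)$ each $\eta\in\Irr(B|\lambda)$ satisfies $\lambda(1)\mid\eta(1)$. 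Neither argument depends on $p$, so \cite[Lemme 1.6]{br90} applies to give the desired permutation property.

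I would then pass to ${}^{[D,D]}M\in\Pic(B^{[D,D]})$ via Proposition~\ref{prop:indMor}(ii)(a), noting by \cite[Corollary 4]{ku87} that $B^{[D,D]}$ is again a single block. At this point the crucial check is that $B^{[D,D]}$ satisfies Assumption~\ref{ass}(ii) in the $p=2$ case: its defect group is $D/[D,D]$, which under the theorem's hypothesis has no $C_2$ direct factor, so all $n_i>1$. This is exactly what is needed for Proposition~\ref{prop:good} to be available for $p=2$. The argument that $C_E(D/[D,D])=Z$ carries through exactly as in the proof of Theorem~\ref{thm:main} using \cite[Corollary 5.3.3]{gor80}.

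Finally, Proposition~\ref{prop:good} supplies $\theta\in\Irr(D_2)$ such that ${}^{[D,D]}M$ sends $\Irr(B^{[D,D]}|1_{D/[D,D]})$ to $\Irr(B^{[D,D]}|1_{D_1}\otimes\theta)$. Letting $\omega\in\Irr(G)$ be the inflation of $\theta$ through the quotient $G\twoheadrightarrow D_2$, the bimodule $M_{\omega^{-1}}\otimes_B M$ permutes $\Irr(B|1_D)$ by the last sentence of Proposition~\ref{prop:indMor}(ii)(a), so Proposition~\ref{prop:indMor}(ii)(b) gives $M_{\omega^{-1}}\otimes_B M\in\cT(B)\subseteq\cL(B)$. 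Since $M_{\omega^{-1}}=\cO_{\Delta\omega^{-1}}\uparrow_{\Delta G}^{G\times G}$ manifestly has linear source, one concludes $M\in\cL(B)$. The only genuinely new ingredient beyond the proof of Theorem~\ref{thm:main} is the verification that $B^{[D,D]}$ lands inside the scope of Assumption~\ref{ass}(ii); this is precisely why the structural hypothesis on $D/[D,D]$ has to be imposed in characteristic $2$, and it is the one point at which the $p=2$ argument could fail without it.
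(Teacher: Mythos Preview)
Your proposal is correct and follows exactly the same route as the paper: the paper's own proof of Theorem~\ref{thm:main2} simply says it is identical to that of Theorem~\ref{thm:main}, with the hypothesis on $D/[D,D]$ guaranteeing that $B^{[D,D]}$ falls under Assumption~\ref{ass}(ii) so that Proposition~\ref{prop:good} applies. You have correctly identified that this single verification is the only additional point needed beyond the odd-characteristic argument.
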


\begin{proof}
The proof is identical to that of Theorem~\ref{thm:main}. Assumption~\ref{ass}(ii) ensured that we were only considering blocks with abelian defect group that had no direct factor isomorphic to $C_2$ throughout $\S$\ref{sec:ext}. We can therefore apply Proposition~\ref{prop:good} in exactly the same way we did in the proof of Theorem~\ref{thm:main}.
\end{proof}

\begin{ack*}
The first author is supported by the EPSRC (grant no EP/T0\linebreak 04606/1). The authors would like to thank Dr. Charles Eaton for many useful conversations, particularly those that aided with the reduction to the abelian defect group case in the proof of Theorem \ref{thm:main}. We are also grateful to Prof. Peter Symonds for the guidance he gave us regarding group cohomology.
\end{ack*}

\end{document}